\def\headcolour{\color{DarkGrey}}
\headcolour\sffamily{P.~Arathoon \& J.~Montaldi}]{\headcolour\thepage}
\theoremstyle{definition}
\newtheorem{defn}{Definition}[section]
\theoremstyle{plain}
\newtheorem{thm}{Theorem}[section]
\newtheorem{cor}[thm]{Corollary}
\newtheorem{propn}[thm]{Proposition}
\numberwithin{equation}{section}
\DeclareMathOperator{\Imag}{Im}
\DeclareMathOperator{\Ad}{Ad}
\DeclareMathOperator{\Gr}{Gr}
\DeclareMathOperator{\spoon}{\text{Span}}
\DeclareMathOperator{\aff}{Aff\mathcal{F}}
\DeclareMathOperator{\F}{\mathcal{F}}
\newcommand{\orb}{\mathcal{O}}
\title{Hermitian flag manifolds and orbits of the Euclidean group}
\author{Philip Arathoon, James Montaldi}
\date{}
\begin{document}
	\maketitle

\begin{abstract}
	 	We study the adjoint and coadjoint representations of a class of Lie group including the Euclidean group. Despite the fact that these representations are not in general isomorphic, we show that there is a geometrically defined bijection between the sets of adjoint and coadjoint orbits of such groups. In addition, we show that the corresponding orbits, although different, are homotopy equivalent. We also provide a geometric description of the adjoint and coadjoint orbits of the Euclidean and orthogonal groups as a special class of flag manifold which we call a Hermitian flag manifold. These manifolds consist of flags endowed with complex structures equipped to the quotient spaces that define the flag.

\medskip

\noindent\emph{Keywords:} Adjoint orbits, coadjoint orbits, semi-direct products, 

\end{abstract}

	\section{Introduction}
	
	For any Lie group, the adjoint and coadjoint representations are of fundamental importance to an understanding of the structure of the Lie group itself, as well as in various applications such as geometric mechanics, symplectic geometry, representation theory, and more. 
	
For the example of the unitary group, the adjoint orbits are well known to equal the isospectral sets of skew-Hermitian matrices, which in turn, may canonically be identified with manifolds of complex flags.  The coadjoint orbits of the unitary group are identical to the adjoint orbits as a consequence of there being an isomorphism between the adjoint and coadjoint representations. Indeed, the same is true for any compact or semisimple Lie group. However, a similar flag-like interpretation of (co-)adjoint orbits for $SO(n)$ appears to be absent from the literature---we provide such an interpretation (as \emph{Hermitian flags}) en route to studying the adjoint and coadjoint orbits of the Euclidean group . 

We therefore consider examples of groups which are neither compact nor semisimple; namely, we consider a class of semidirect products which we call \emph{groups of Euclidean type}. Specifically, such a group \(G\) is obtained by fixing a compact Lie group \(H\) together with a representation  \(V\) and forming the semidirect product \(G=H\ltimes V\) whose group product is given by 
\[
(h_1,v_1)(h_2,v_2)=(h_1h_2, v_1+h_1v_2).
\]
The Euclidean group \(E(n)\) is an example of such a group, given by setting \(H=O(n)\) and \(V=\mathbb{R}^n\) with the standard representation. For the particular example of the Euclidean and orthogonal groups, we classify the adjoint and coadjoint orbits, and exhibit these orbits as a \emph{manifolds of Hermitian flags}. These manifolds consist of regular flags which are additionally equipped with some extra structure. The quotient spaces which define a flag may be equipped with orientations or a metric preserving complex structure; hence our choice of the term \emph{Hermitian}. The well-established literature concerning the coadjoint orbits of a semidirect product, allow us to derive interesting results concerning the symplectic geometry of such flag manifolds.
	
The adjoint and coadjoint representations for the Euclidean group are not in general isomorphic. Therefore, there is no reason to suspect that both orbits should have much in common. However, for the example of the Poincar\'{e} group, obtained by setting \(H\) equal to an indefinite orthogonal group, there exists a ``curious bijection'' between the sets of adjoint and coadjoint orbits \cite{cush06}. This bijection may be visually demonstrated for the example of the special Euclidean group in two dimensions. Figure~\ref{se2} shows the adjoint and coadjoint orbits of \(SE(2)\). Observe how there exists a geometric bijection between orbits: cylinder coadjoint orbits to circle adjoint orbits, plane adjoint orbits to point coadjoint orbits, and both origins to each other. Moreover, two orbits corresponding under this bijection are homotopic. We will exhibit this bijection for all groups of Euclidean type, and prove that all orbits which correspond under the bijection are homotopic to each other. For the Euclidean group a stronger result is true: we show that for any two orbits in bijection, one is a vector or affine bundle over the other.

The paper is in two parts: the first (Section \ref{sec:orbit geometry}) describes the relation mentioned above between adjoint and coadjoint orbits of groups of Euclidean type, while in Section\,\ref{sec:Euclidean} we discuss the particular case of the Euclidean group itself, where more detailed geometry is  determined. 

Our results only apply to semidirect products of Euclidean type, and thus exclude the example of the Poincar\'{e} group on account of the indefinite orthogonal groups being non-compact. However, our methods can in fact be adapted so that they apply to a wider class of semidirect products including the Poincar\'{e} group. This is beyond the aims of this paper but will be included in a sequel \cite{Arathoon}.

	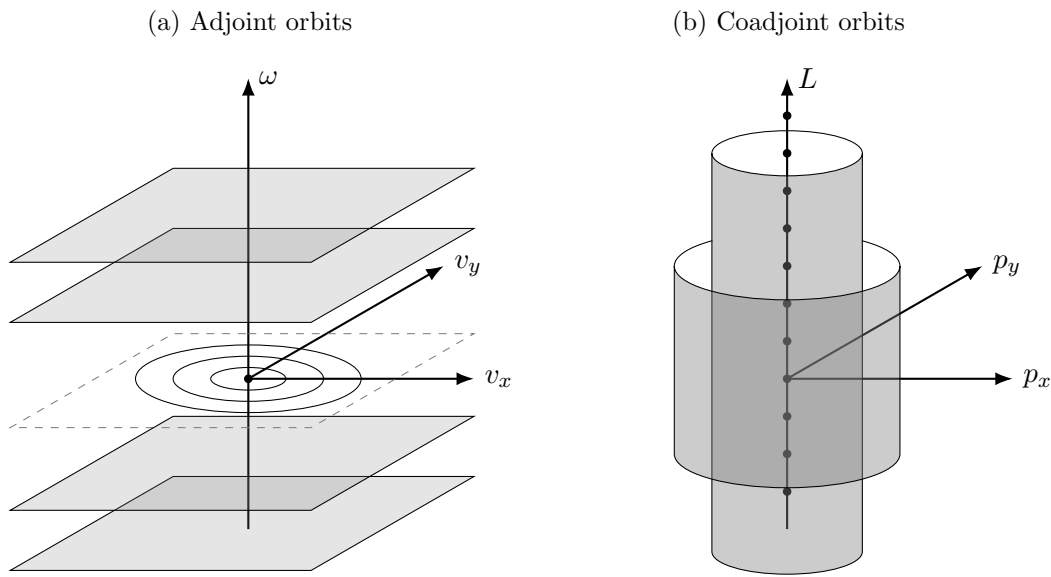
\begin{figure}[h]
		\centering
		\begin{subfigure}{0.4\textwidth}
			\caption{Adjoint orbits}
			\begin{tikzpicture}
			\path[use as bounding box, ] (-3.5,-3) rectangle (3.5,4.3);
			\draw[thick,-Latex] (0,0)-- (3,0)node[right]{$v_x$};
			\draw[thick,-Latex] (0,0) -- (30:3)node[right]{$v_y$};
			\draw[thick,-Latex] (0,-2) -- (0,4)node[right]{$\omega$};
			
			\draw[fill] (0,0) circle (0.05);
			
			\draw[dashed,gray] (0,0.6) -- ++(3,0) -- ++(210:2.5) -- ++(-4,0) -- ++(30:2.5) --cycle;
			\draw circle (0.5 and 0.15);
			\draw circle (1 and 0.3);
			\draw circle (1.5 and 0.45);
			
			\draw[fill,fill opacity=0.2,gray,draw=black] (0,2) -- ++(3,0) -- ++(210:2.5) -- ++(-4,0) -- ++(30:2.5) --cycle;
			
			\draw[fill,fill opacity=0.2,gray,draw=black] (0,2.8) -- ++(3,0) -- ++(210:2.5) -- ++(-4,0) -- ++(30:2.5) --cycle;
			\draw[fill,fill opacity=0.2,gray,draw=black] (0,-1.3) -- ++(3,0) -- ++(210:2.5) -- ++(-4,0) -- ++(30:2.5) --cycle;
			\draw[fill,fill opacity=0.2,gray,draw=black] (0,-0.5) -- ++(3,0) -- ++(210:2.5) -- ++(-4,0) -- ++(30:2.5) --cycle;
			\end{tikzpicture}
		\end{subfigure}
		\begin{subfigure}{0.4\textwidth}
			\caption{Coadjoint orbits}
			\begin{tikzpicture}
			\path[use as bounding box, ] (-3.5,-3) rectangle (3.5,4.3);
			\draw[thick,-Latex] (0,0)-- (3,0)node[right]{$p_x$};
			\draw[thick,-Latex] (0,0) -- (30:3)node[right]{$p_y$};
			\draw[thick,-Latex] (0,-2) -- (0,4)node[right]{$L$};
			
			\foreach \x in {-1.5,-1,...,3.5} 
			\draw[fill] (0,\x) circle (0.05);
			
			\draw (1,3) arc (0:180:1 and 0.3);
			\draw[fill=gray,fill opacity=0.4,draw=black] (-1,3) arc (180:360:1 and 0.3) -- (1,-2.3) arc(0:-180: 1 and 0.3) -- cycle;
			\draw (1.5,1.5) arc (0:48.5:1.5 and 0.3*1.5);
			\draw (-1.5,1.5) arc (0:48.5:-1.5 and 0.3*1.5);
			\draw[fill=gray,fill opacity=0.4,draw=black] (-1.5,1.5) arc (180:360:1.5 and 0.3*1.5) -- (1.5,-1)arc(0:-180: 1.5 and 0.3*1.5) -- cycle;
			\end{tikzpicture}
		\end{subfigure}
\begin{minipage}{0.9\textwidth}
\caption{Orbits for the special Euclidean group \(SE(2)\) on its Lie algebra and dual. Let \((\omega,{v})\) denote elements in the Lie algebra \(\mathfrak{se}(2)=\mathfrak{so}(2)\times\mathbb{R}^2\), and \((L,p)\) for elements in the dual. The adjoint representation is given by \(\Ad_{(r,{d})}(\omega,{v})=(\omega,r={v}-\omega R_{\pi/2}{d})\) and the coadjoint representation by \(\Ad_{(r,{d})}^*(L,{p})=( L+{d}^T R_{\pi/2}{p},r{p})\). Here, \(R_{\pi/2}\) is an anticlockwise rotation by \(\pi/2\) and \((r,{d})\) is an element in \(SE(2)\).}
\label{se2} 
\end{minipage}
	\end{figure}
	
It is natural to ask how much of the bijection properties discussed in this paper extend to more general groups, and to other contragredient pairs of representations. 
	
	\subsection{Notation}
	Let \(G\) be a group and \(M\) a space upon which \(G\) acts, \(G\times M\rightarrow M;~(g,m)\mapsto gm\). We denote the orbit through \(m\in M\) by \(\orb^G_m\). The stabiliser or isotropy subgroup of this action at \(m\) is the subgroup \(\{g\in G~|~gm=m\}\) and is denoted by \(G_m\). The orbits are themselves homogeneous \(G\)-spaces which may be written as a coset space \(\orb^G_m\cong G/G_m\).
	
	Throughout this paper \(H\) will denote a Lie group with Lie algebra \(\mathfrak{h}\) and identity element \(e\), and \(V\) a representation of \(H\). From this we consider the semidirect product \(G=H\ltimes V\) as defined earlier with Lie algebra \(\mathfrak{g}=\mathfrak{h}\times V\). Our guiding example will be that of the Euclidean group \(G=E(n)\).
	
	\begin{defn}
		The semidirect product $G=H\ltimes V$ will be called a group of \emph{Euclidean type} if $H$ is compact. Since any compact group is a subgroup of an orthogonal group, we remark that every group of Euclidean type is a subgroup of the Euclidean group $E(n)$ for large enough $n$.
	\end{defn}

	 The elements of the Euclidean group, along with its Lie algebra and dual, have well known physical meanings together with standard notation. We mimic this notation to represent typical elements of our group \(G\).
	
	\begin{itemize}
		\item[\textendash] Group elements, \((r,d)\in G\); \(r\in H\) for \textit{rotation} and \(d\in V\) for \textit{displacement}.
		
		\item[\textendash] Lie algebra elements, \((\omega,v)\in \mathfrak{g}\); \(\omega\in \mathfrak{h}\) for \textit{angular velocity} and \(v\in V\) for \textit{linear velocity}.
		
		\item[\textendash] Dual Lie algebra elements, \((L,p)\in \mathfrak{g}^*\); \(L\in \mathfrak{h}^*\) for \textit{angular momentum} and \(p\in V^*\) for \textit{linear momentum}.
	\end{itemize}

\section{Orbit geometry} \label{sec:orbit geometry}
	In \cite{rawnsley} the isotropy subgroup for a coadjoint orbit is shown to be a group extension of a smaller group, referred to in the literature as a \emph{little group}. Here we derive an analogous result for the adjoint orbits, and restrict our attention to the case where the group extension is a split extension; whereby the isotropy subgroup is itself given by a semidirect product. With this condition on the orbits, a property we call \emph{properness}, we are able to exhibit a rich geometric structure between the orbits and their canonical bundles and submanifolds.
	\subsection{Isotropy subgroups }
	The dual \(\mathfrak{g}^*\) is canonically isomorphic to \(\mathfrak{h}^*\times V^*\). For any \(\eta\in\mathfrak{g}^*\) we may identify it with the pair \((L,p)\in\mathfrak{h}^*\times V^*\) which satisfies \(\langle \eta,(\omega,v)\rangle=\langle L,\omega\rangle+\langle p,v\rangle\) for every \((\omega,v)\) in \(\mathfrak{g}\), and where \(\langle~,~\rangle\) denotes the pairing between a space and its dual. The coadjoint action is then given by \cite{rawnsley}
	\begin{equation}
	\label{alt_Coad_action}
	\Ad_{(r,d)}^*(L,p)=\left(\Ad_r^*L+\mu\left(rp,d\right),rp\right).
	\end{equation}
	Here we have the momentum map \(\mu\colon V^*\times V\longrightarrow\mathfrak{h}^*\) satisfying
	\begin{equation}\label{mom_map}
	\langle\mu(p,v),\omega\rangle=\langle p,\omega v\rangle
	\end{equation}
	for all \(\omega\in\mathfrak{h}\). For a fixed \(p\in V^*\) introduce the map
	 	\begin{equation}\label{tau}
	 	\tau_p\colon V\longrightarrow\mathfrak{h}^*
	 	\end{equation}
	 	given by \(\tau_p(v)=\mu(p,v)\). The group \(H_p=\{r~|~rp=p\}\) is called the \emph{little group}. The Lie algebra of this group \(\mathfrak{h}_p=\{\omega~|~\omega p=0\}\) has the property that its annihilator \(\mathfrak{h}_p^\circ\) in \(\mathfrak{h}^*\) is equal to the image of \(\tau_p\) \cite[Lemma 1]{rawnsley}.
	 	
	 	For a given \((L,p)\) consider the isotropy subgroup \(G_{L,p}\). From \eqref{alt_Coad_action}, for \((r,d)\) to belong to \(G_{L,p}\) we must first have \(rp=p\), and therefore that \(r\) belongs to \(H_p\); and secondly that \(\Ad_r^*L+\tau_p(d)=L\). Project both sides of the second equation onto \(\mathfrak{h}_p^*\) using the canonical restriction map \(\iota^*_p\colon\mathfrak{h}^*\rightarrow\mathfrak{h}_p^*\). Since \(\Imag\tau_p=\mathfrak{h}_p^\circ=\ker\iota^*_p\), and given that \(\iota^*_p\) commutes with the coadjoint action restricted to \(H_p\), we have that \(\Ad^*_r(\iota^*_pL)=\iota^*_pL\). Therefore \(r\) belongs to the subgroup \({(H_p)}_{\iota^*_pL}\) and \(G_{L,p}\) fits into the exact sequence
\begin{equation}
\label{Coad_isotrop}
\{0\}\longrightarrow\ker\tau_p\overset{i}{\longrightarrow}G_{L,p}\overset{j}{\longrightarrow}\left(H_p\right)_{\iota^*_pL}\longrightarrow\{e\}.
\end{equation}
For this sequence we have \(i(d)=(e,d)\) and \(j(r,d)=r\). This sequence is not usually split. If however, the inclusion \(\sigma\colon \left(H_p\right)_{\iota^*_pL}\longrightarrow G\) given by \(\sigma(r)=(r,0)\) defines a homomorphism into \(G_{L,p}\), then we have a split exact sequence, and the isotropy subgroup is equal to
\begin{equation}
\label{Coad_proper_isotropygroup}
G_{L,p}=\left(H_p\right)_{\iota^*_pL}\ltimes\ker\tau_p.
\end{equation}
This condition is satisfied whenever \(\left(H_p\right)_{\iota^*_pL}\times\{0\}\) belongs to \(G_{L,p}\), or equivalently when
\begin{equation}\label{proper_coad}
\left(H_p\right)_{\iota^*_pL}=H_p\cap H_L.
\end{equation}
We now provide an analogous result for the adjoint orbits. Let \((\omega,v)\) belong to \(\mathfrak{g}\) and consider the expression for the adjoint action \cite[Section 19]{stern},
\begin{equation}
\label{alt_Ad_action}
\Ad_{(r,d)}(\omega,v)=\left(\Ad_r\omega,rv-(\Ad_r\omega)d\right).
\end{equation}
If \((r,d)\) is to belong to the stabiliser \(G_{\omega,v}\) then we firstly require \(r\in H_\omega\). We then must have \(rv-\omega d=v\). Project both sides of this equation onto the quotient space \(V/\Imag\omega\). This projection commutes with the representation of \(H_\omega\) on \(V/\Imag\omega\) (note that this is well defined since \(\Imag\omega\) is invariant under \(H_\omega\)). Hence \(r[v]=[v]\) and so we must additionally have that \(r\) belongs to the subgroup \((H_\omega)_{[v]}\). The stabiliser \(G_{\omega,v}\) then fits into the exact sequence
\begin{equation}\label{adjoint_isotrop}
\{0\}\longrightarrow\ker\omega\overset{i}{\longrightarrow}G_{\omega,v}\overset{j}{\longrightarrow}\left(H_\omega\right)_{[v]}\longrightarrow\{e\}.
\end{equation}
Here once again, \(i(d)=(e,d)\) and \(j(r,d)=r\). As with the coadjoint isotropy subgroups, this exact sequence is not necessarily split. If however, there exists a splitting map \(\sigma\colon (H_\omega)_{[v]}\longrightarrow G_{\omega,v}\) given by \(\sigma(r)=(r,0)\), then \(G_{\omega,v}\) is equal to the semidirect product
\begin{equation}
\label{Ad_isotropygroup_proper}
G_{\omega,v}=(H_\omega)_{[v]}\ltimes\ker\omega.
\end{equation}
Equivalently, this is the case when
\begin{equation}\label{proper_ad}
(H_\omega)_{[v]}=H_\omega\cap H_v.
\end{equation}
\begin{defn}
	If equations~{\eqref{proper_ad}} and~{\eqref{proper_coad}} hold for given points $(\omega,v)\in\mathfrak{g}$ and $(L,p)\in\mathfrak{g}^*$, then these points together with the orbits through them will be called \emph{proper}. This is equivalent to the isotropy subgroups being equal to the semidirect products in \eqref{Ad_isotropygroup_proper} and~\eqref{Coad_proper_isotropygroup}.
\end{defn}
\subsection{Bundles and submanifolds}
The orbit through any point contains two natural submanifolds given by restricting the action through this point to the two subgroups, \(H=H\times\{0\}\), and \(V=\{e\}\times V\). The purpose of this subsection is to exhibit an interplay between these submanifolds and the natural bundle structures defined between orbits. 

\begin{thm}
	\label{orbit_geometry_thm}
	For when the points \((\omega,v)\in\mathfrak{g}\) and \((L,p)\in\mathfrak{g}^*\) are proper, the \(G\)- and \(H\)-orbits are connected by the bundle maps in Figure~\ref{orbit_geometry_fig}. By insisting that \(V\) act trivially on the \(H\)-spaces in the diagram, all maps are \(G\)-equivariant with fibres equivariantly diffeomorphic to the space written next to the arrow. The horizontal arrows are vector bundles with fibres equal to the orbits of \(V\). For the coadjoint orbit \(\orb^G_{L,p}\) equipped with the Kirillov-Kostant-Souriau (K.K.S.) symplectic form, the fibres of \(\orb^G_{L,p}\rightarrow\orb^H_{L,p}\) are isotropic submanifolds, and the fibres of \(\orb^G_{L,p}\rightarrow\orb^G_{0,p}\) are symplectic submanifolds.
\end{thm}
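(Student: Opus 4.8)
The plan is to realise all four orbits in each square of Figure~\ref{orbit_geometry_fig} as homogeneous spaces and to induce every map from the single projection $G=H\ltimes V\to H$ together with the coset inclusions coming from the nested isotropy subgroups. Writing $K:=H_p\cap H_L$ and using properness \eqref{Coad_proper_isotropygroup} together with the observation that the little group of $0\in\mathfrak{h}^*$ inside $H_p$ is all of $H_p$, the coadjoint square reads
\[
\orb^G_{L,p}\cong G/(K\ltimes\ker\tau_p),\qquad \orb^G_{0,p}\cong G/(H_p\ltimes\ker\tau_p),
\]
with $\orb^H_{L,p}\cong H/K$ and $\orb^H_p\cong H/H_p$. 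The inclusion $K\ltimes\ker\tau_p\subseteq H_p\ltimes\ker\tau_p$ induces the vertical map $\orb^G_{L,p}\to\orb^G_{0,p}$, whose fibre is $(H_p\ltimes\ker\tau_p)/(K\ltimes\ker\tau_p)\cong H_p/K$, while $G\to H$ descends to the horizontal maps $\orb^G_{L,p}\to\orb^H_{L,p}$ and $\orb^G_{0,p}\to\orb^H_p$, each with fibre $V/\ker\tau_p$. The adjoint square is obtained verbatim with $\tau_p$ replaced by the map $\omega\colon V\to V$ and $K$ by $(H_\omega)_{[v]}$, using \eqref{Ad_isotropygroup_proper}. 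Since every map is induced by a group homomorphism, $G$-equivariance is automatic once $V$ is made to act trivially on the $H$-homogeneous spaces.

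Next I would match the fibres with the orbits written on the arrows. From \eqref{alt_Coad_action} one has $\Ad^*_{(e,d)}(L,p)=(L+\tau_p(d),p)$, so the $V$-orbit through $(L,p)$ is the affine space $(L+\Imag\tau_p,p)\cong V/\ker\tau_p$; this is exactly the horizontal fibre. The assignment $hK\mapsto\Ad^*_{(h,0)}(L,p)$ is a well-defined section realising $\orb^H_{L,p}$ as a zero section, and checking that $\ker\tau_p$ is $H_p$-invariant shows $V/\ker\tau_p$ is a linear $K$-representation; hence the horizontal arrow is the associated vector bundle $H\times_K(V/\ker\tau_p)\to H/K$, proving the vector-bundle claim. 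For the vertical fibre, $\Ad^*_{(r,d)}(L,p)=(\Ad^*_rL,p)$ whenever $r\in H_p$ and $d\in\ker\tau_p$, so the fibre $H_p/K$ is the little-group coadjoint orbit $\orb^{H_p}_{\iota^*_pL}$, its $H_p$-stabiliser being $K$ by properness.

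The substantive step is the symplectic geometry, carried out pointwise at $(L,p)$ with the KKS form $\Omega$, which satisfies $\Omega(X_\eta,Y_\eta)=\langle\eta,[X,Y]\rangle$ on fundamental vector fields. The horizontal fibre is tangent to the vector fields of $(0,d)$ with $d\in V$; since $V$ is an abelian subalgebra, $[(0,d_1),(0,d_2)]=0$ and $\Omega$ vanishes on this fibre, which is therefore isotropic. The vertical fibre is tangent to the vector fields of $(\xi,0)$ with $\xi\in\mathfrak{h}_p$; here $[(\xi_1,0),(\xi_2,0)]=([\xi_1,\xi_2],0)$ gives
\[
\Omega=\langle(L,p),([\xi_1,\xi_2],0)\rangle=\langle L,[\xi_1,\xi_2]\rangle=\langle\iota^*_pL,[\xi_1,\xi_2]\rangle,
\]
which is precisely the KKS form of the little-group orbit $\orb^{H_p}_{\iota^*_pL}$. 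As that form is non-degenerate, the vertical fibre is symplectic.

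The main obstacle is not the two bracket computations, which are immediate, but the bookkeeping that puts all four orbits into one commuting square of $G$-equivariant bundle maps with the fibres named above. This is exactly where properness is indispensable: it is what guarantees that the isotropy subgroups split as the semidirect products \eqref{Coad_proper_isotropygroup} and \eqref{Ad_isotropygroup_proper}, so that the fibres emerge as the clean quotients $V/\ker\tau_p$ and $H_p/K$ rather than as extensions of them, and so that the restricted KKS form reduces to the little-group form with no correction terms.
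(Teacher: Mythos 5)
Your proposal is correct and follows essentially the same route as the paper: properness gives the split isotropy subgroups \eqref{Coad_proper_isotropygroup} and \eqref{Ad_isotropygroup_proper}, all maps in Figure~\ref{orbit_geometry_fig} are induced by the resulting subgroup inclusions together with the projection \(G\to H\), and the vertical coadjoint fibre is identified with the little-group coadjoint orbit \(\orb^{H_p}_{\iota^*_pL}\) with matching K.K.S.\ forms. The only divergences are harmless strengthenings of the same argument: where the paper cites \cite[Proposition 4.4]{baguis} for isotropy of the \(V\)-orbit fibres you prove it directly from the vanishing bracket on the abelian factor, and you make the vector-bundle claim explicit as the associated bundle \(H\times_K(V/\ker\tau_p)\), which is the paper's ``image embeds as a zero section'' remark spelled out in detail.
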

\begin{proof}
	
	By properness of \((\omega,v)\) and \((L,p)\) we can use equations \eqref{Ad_isotropygroup_proper} and \eqref{Coad_proper_isotropygroup} to express the isotropy subgroups of all the spaces. The equivariant maps in the diagrams then follow. For instance: \(G_{\omega,v}=(H_\omega\cap H_v)\ltimes\ker\omega\subset(H_\omega\cap H_v)\ltimes V=G_{\omega,0}\) and so the map \(\Ad_{(r,d)}(\omega,v)\mapsto\Ad_{(r,d)}(\omega,0)\) gives a well-defined equivariant bijection \(\orb^G_{\omega,v}\rightarrow\orb^G_{\omega,0}\). That the horizontal fibres are vector bundles follows from the fact that the fibres are equal to the orbits of \(V\) which are isomorphic to \(\Imag\omega\) and \(\mathfrak{h}^\circ_p\), and that the image of the bundle map naturally embeds as a zero section.

	Now to turn our attention to the symplectic geometry of the coadjoint orbits. That the orbits of \(V\) in \(\orb^G_{L,p}\) are isotropic submanifolds is proven in \cite[Proposition 4.4]{baguis}. Finally, for \(r\in H_p\) the map \(\Ad_{(r,0)}^*(L,p)\mapsto\Ad_r^*(\iota^*_pL)\) defines an equivariant diffeomorphism between the fibre of \(\orb^G_{L,p}\rightarrow\orb^G_{0,p}\) over \((0,p)\), and the coadjoint orbit of \(H_p\) through \(\iota^*_pL\in\mathfrak{h}_p^*\). It is a straightforward calculation to check that the pullback of the K.K.S. form on the coadjoint orbit in \(\mathfrak{h}_p^*\) agrees with the K.K.S. form on \(\orb^G_{L,p}\) restricted to the fibre.
\end{proof}

\begin{figure}[h]
	\centering
	\begin{subfigure}{0.35\textwidth}
		\begin{tikzpicture}[scale=0.6]
		\path[use as bounding box] (-2,-1) rectangle (8,7);
		\draw[-{Latex}] (0.9,0) node [left,scale=1.4]{$\orb^G_{\omega,0}$} -- (5.4,0) node[right,scale=1.4]{$\orb^H_\omega$};
		\draw[-{Latex} ] (6,5.1) node[scale=1.4,above] {$\orb^H_{\omega,v}$} -- (6,.6);
		\draw[-{Latex}  ] (0,5.4)  -- (0,0.8);
		\draw[-{Latex}] (1.4,6)node[left, scale=1.4] {$\orb^G_{\omega,v}$} -- (5,6);
		\node at (3,6.5){$\orb^V_{\omega,v}\cong\Imag\omega$};
		\node at (3,-0.5){$\orb^V_{\omega,0}\cong\Imag\omega$};
		\node at (7,3){$\orb^{H_\omega}_{[v]}$};		
		\node at (-1,3)	{$\orb^{H_\omega}_{[v]}$ };
		\end{tikzpicture}
	\end{subfigure}
	\begin{subfigure}{0.35\textwidth}
		\begin{tikzpicture}[scale=0.6]
		\path[use as bounding box](-2,-1) rectangle (8,7);
		\draw[-{Latex}] (0.9,0) node [left,scale=1.4]{$\orb^G_{0,p}$} -- (5.4,0) node[right,scale=1.4]{$\orb^H_p$};
		\draw[-{Latex} ] (6,5.1) node[scale=1.4,above] {$\orb^H_{L,p}$} -- (6,.6);
		\draw[-{Latex}  ] (0,5.4)  -- (0,0.8);		
		\draw[-{Latex}] (1.4,6)node[left, scale=1.4] {$\orb^G_{L,p}$} -- (5,6);		
		\node at (3,6.5){$\orb^V_{L,p}\cong\mathfrak{h}^\circ_p$};		
		\node at (3,-0.5){$\orb^V_{0,p}\cong\mathfrak{h}^\circ_p$};		
		\node at (7,3){$\orb^{H_p}_{\iota^*_pL}$};		
		\node at (-1,3)	{$\orb^{H_p}_{\iota^*_pL}$ };
		\end{tikzpicture}		
	\end{subfigure}		
\caption{\label{orbit_geometry_fig}}
\end{figure}

\subsection{An orbit bijection}

From here on we suppose that \(H\) is compact and so \(G\) is a group of Euclidean type. Using the standard averaging arguments, we may equip each of \(V\) and \(\mathfrak{h}\) with an \(H\)-invariant inner product; say \(B_V\) and \(B_{\mathfrak{h}}\) respectively. With this additional structure we may establish \(H\)-equivariant isomorphisms between \(V\) and \(\mathfrak{h}\) with their duals. These isomorphisms will be given by sending \(v\) and \(\omega\) to the elements \(p(v)\) and \(L(\omega)\) satisfying \(B_V(v,x)=\langle p(v),x\rangle\) and \(B_\mathfrak{h}(\omega,\xi)=\langle L(\omega),\xi\rangle\) for all \(x\in V\) and \(\xi\in\mathfrak{h}\).
\begin{defn}
	Define the subset \(\Delta\subset\mathfrak{g}\) by
	\begin{equation}
	\Delta\coloneqq\left\{(\omega,v)~|~\omega v=0\right\}.
	\end{equation}
	The image of \(\Delta\) under the \(H\)-equivariant isomorphism \(\varphi\colon\mathfrak{g}\rightarrow\mathfrak{g}^*\) given by \(\varphi(\omega,v)=\left(L(\omega),p(v)\right)\) is the set \(\varphi(\Delta)\coloneqq\Delta^*\). We will call these the \emph{Cartan subsets}.
\end{defn}

 In the next proposition we show that these sets serve as normal forms for the orbits. This result explains our choice of terminology; the Cartan subsets are analogous to the Cartan subspace of a compact Lie algebra, through which all adjoint orbits intersect \cite[Chapter 4]{bott}.

\begin{propn}\label{delta_sets}
	The adjoint and coadjoint orbits of a group \(G\) of Euclidean type intersect the sets \(\Delta\) and \(\Delta^*\) respectively. Furthermore, all points belonging to these sets are proper and therefore, all orbits of \(\mathfrak{g}\) and \(\mathfrak{g}^*\) are proper. Finally, for any \((\omega,v)\in\Delta\) and \((L,p)\in\Delta^*\), we have \(\orb^G_{\omega,v}\cap\Delta=\orb^{H}_{\omega,v}\) and \(\orb^G_{L,p}\cap\Delta^*=\orb^{H}_{L,p}\). 
\end{propn}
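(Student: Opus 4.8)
The plan is to reduce everything to two orthogonal splittings coming from the invariant inner products, and to run the adjoint and coadjoint cases in parallel. On the adjoint side the key observation is that, with respect to the $H$-invariant inner product $B_V$, every $\omega\in\mathfrak{h}$ acts on $V$ as a skew-symmetric operator, so that $V=\ker\omega\oplus\Imag\omega$ orthogonally and in particular $\ker\omega\cap\Imag\omega=\{0\}$. The mirror statement on the coadjoint side is the splitting $\mathfrak{h}^*=L(\mathfrak{h}_p)\oplus\mathfrak{h}_p^\circ$, which follows from the orthogonal decomposition $\mathfrak{h}=\mathfrak{h}_p\oplus\mathfrak{h}_p^{\perp}$ induced by $B_\mathfrak{h}$ together with $L(\mathfrak{h}_p)=(\mathfrak{h}_p^{\perp})^\circ$ and, from \cite[Lemma 1]{rawnsley}, $\mathfrak{h}_p^\circ=\Imag\tau_p$. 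Here $\mathfrak{h}_p^\circ$ is exactly the direction of the $V$-orbit (recall $\orb^V_{L,p}\cong\mathfrak{h}_p^\circ$), just as $\Imag\omega$ is the direction of the $V$-orbit on the adjoint side (recall $\orb^V_{\omega,v}\cong\Imag\omega$). I would also record at the outset that $\Delta$ is $H$-invariant and that $\Delta^*=\varphi(\Delta)$ is its image under the $H$-equivariant $\varphi$; writing $(L,p)=\varphi(\omega,v)$, equivariance gives $H_p=H_v$ and $\mathfrak{h}_p=\mathfrak{h}_v=\{\xi\mid\xi v=0\}$, so that $(L,p)\in\Delta^*$ is equivalent to $\omega\in\mathfrak{h}_p$, i.e.\ to $L=L(\omega)\in L(\mathfrak{h}_p)$.

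For the intersection claim it suffices to use the $V$-action. From \eqref{alt_Ad_action} with $r=e$ we have $\Ad_{(e,d)}(\omega,v)=(\omega,v-\omega d)$, so a suitable $d$ cancels the $\Imag\omega$-component of $v$, landing in $\ker\omega$, hence in $\Delta$. Dually, from \eqref{alt_Coad_action} with $r=e$ we have $\Ad_{(e,d)}^*(L,p)=(L+\tau_p(d),p)$; since $\tau_p(d)$ ranges over $\mathfrak{h}_p^\circ$, a suitable $d$ cancels the $\mathfrak{h}_p^\circ$-component of $L$ in the splitting above, landing in $L(\mathfrak{h}_p)$, hence in $\Delta^*$.

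For properness I would verify the defining equations \eqref{proper_ad} and \eqref{proper_coad} at points of $\Delta$ and $\Delta^*$; only the inclusions $(H_\omega)_{[v]}\subseteq H_\omega\cap H_v$ and $(H_p)_{\iota^*_pL}\subseteq H_p\cap H_L$ require work. In the adjoint case $r\in H_\omega$ commutes with $\omega$ and so preserves $\ker\omega$ and $\Imag\omega$; if $r[v]=[v]$ then $rv-v\in\Imag\omega$, while $v\in\ker\omega$ gives $rv-v\in\ker\omega$, so $rv-v\in\ker\omega\cap\Imag\omega=\{0\}$ and $r\in H_v$. The coadjoint case is identical in spirit: for $r\in H_p$ the map $\iota^*_p$ intertwines the coadjoint actions, so $\Ad_r^*(\iota^*_pL)=\iota^*_pL$ yields $\Ad_r^*L-L\in\mathfrak{h}_p^\circ$; meanwhile $\omega\in\mathfrak{h}_p$ and $r\in H_p$ force $\Ad_r^*L=L(\Ad_{r}\omega)\in L(\mathfrak{h}_p)$ by $H$-equivariance of $L$, so $\Ad_r^*L-L\in L(\mathfrak{h}_p)\cap\mathfrak{h}_p^\circ=\{0\}$ and $r\in H_L$.

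For the final equalities, the inclusions $\orb^H_{\omega,v}\subseteq\orb^G_{\omega,v}\cap\Delta$ and $\orb^H_{L,p}\subseteq\orb^G_{L,p}\cap\Delta^*$ follow from the $H$-invariance of $\Delta$ and $\Delta^*$. For the reverse, take a general orbit point $\Ad_{(r,d)}(\omega,v)=(\omega',\,rv-\omega'd)$ with $\omega'=\Ad_r\omega$, and suppose it lies in $\Delta$; then $rv-\omega'd\in\ker\omega'$, while $rv\in\ker\omega'$ because the $H$-orbit already lies in $\Delta$, whence $\omega'd\in\ker\omega'\cap\Imag\omega'=\{0\}$ and the point equals the $H$-orbit point $(\omega',rv)$. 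The coadjoint computation is the term-by-term mirror, using $\Ad_{(r,d)}^*(L,p)=(\Ad_r^*L+\tau_{rp}(d),rp)$: both $\Ad_r^*L$ and the resulting $L$-component lie in $L(\mathfrak{h}_{rp})$, while $\tau_{rp}(d)\in\mathfrak{h}_{rp}^\circ$, so the $\tau$-term again vanishes and the point is the $H$-orbit point. I expect the main obstacle to be the coadjoint splitting $\mathfrak{h}^*=L(\mathfrak{h}_p)\oplus\mathfrak{h}_p^\circ$ together with the translation of the abstract condition $(L,p)\in\Delta^*$ into the concrete $L\in L(\mathfrak{h}_p)$: this is the one step that genuinely combines the $H$-equivariance of $\varphi$ (to pass between $\mathfrak{h}_p$ and $\mathfrak{h}_v$) with Rawnsley's identification $\Imag\tau_p=\mathfrak{h}_p^\circ$. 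Once this splitting is in place, the coadjoint arguments are direct translations of the adjoint ones.
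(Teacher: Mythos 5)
Your proposal is correct and follows essentially the same route as the paper: the orthogonal splitting \(V=\ker\omega\oplus\Imag\omega\) from skew-adjointness of \(\omega\), the dual splitting \(\mathfrak{h}^*=L(\mathfrak{h}_p)\oplus\mathfrak{h}_p^\circ\) via Rawnsley's identification \(\Imag\tau_p=\mathfrak{h}_p^\circ\), the \(V\)-action to move any orbit point into \(\Delta\) or \(\Delta^*\), and the \(H\)-equivariance of \(\varphi\) to verify properness. The only difference is one of detail: you spell out explicitly (and correctly) the ``intersection equals \(H\)-orbit'' computations and the translation of \((L,p)\in\Delta^*\) into \(L\in L(\mathfrak{h}_p)\), which the paper leaves as observations.
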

\begin{proof}
	As \(\omega\) is skew-self-adjoint with respect to \(B_V\), \(V\) admits an orthogonal decomposition \(V=\Imag\omega\oplus\ker\omega\). With reference to the adjoint action in \eqref{alt_Ad_action}, observe that any adjoint orbit must therefore intersect \(\Delta\), and that this intersection is equal to an orbit of \(H\). As the decomposition \(V=\Imag\omega\oplus\ker\omega\) is orthogonal with respect to the inner product \(B_V\), the inclusion defines an \(H_\omega\)-equivariant isomorphism between \(\ker\omega\) and the quotient \(V/\Imag\omega\). Thus, for any \((\omega,v)\) in \(\Delta\) we have \(v\in\ker\omega\), and so \((H_\omega)_{[v]}=H_\omega\cap H_v\). Therefore, every orbit in \(\mathfrak{g}\) is proper.
	
	For the coadjoint orbits we claim to have the \(H_p\)-invariant decomposition \(\mathfrak{h}^*=\mathfrak{h}_p^\circ\oplus L(\mathfrak{h}_p)\) for any \(p\in V^*\). This holds because \(L(\mathfrak{h}_p)\) is the orthogonal complement to \(\mathfrak{h}_p^\circ\) with respect to the inner product \(B_\mathfrak{h}\). With reference to equation~\eqref{alt_Coad_action} and the fact that \(\Imag\tau_p=\mathfrak{h}_p^\circ\), one sees that every orbit in \(\mathfrak{g}^*\) intersects \(\Delta^*\), and that this intersection is equal to an orbit of \(H\). The map \(\iota^*_p\) restricted to \(L(\mathfrak{h}_p)\) defines an \(H_p\)-equivariant isomorphism between \(L(\mathfrak{h}_p)\) and \(\mathfrak{h}_p^*\). Hence, for any \((L,p)\) in \(\Delta^*\) we have \((H_p)_{\iota^*_pL}=H_p\cap H_{L}\) and therefore all points in \(\Delta^*\) are proper.
\end{proof}
As the bijection \(\varphi\colon\Delta\rightarrow\Delta^*\) is \(H\)-equivariant, it follows that the set of \(H\)-orbits in \(\Delta\) is in bijection with the set of \(H\)-orbits in \(\Delta^*\). However, as a corollary to Proposition~\ref{delta_sets} the sets of adjoint and coadjoint orbits of \(G\) are in bijection with the sets of \(H\)-orbits in \(\Delta\) and \(\Delta^*\) respectively, and therefore by extension, so too are orbits in \(\mathfrak{g}\) with \(\mathfrak{g}^*\).

\begin{thm}\label{bijection}
	The map \(\varphi\colon\Delta\rightarrow\Delta^*\) establishes a bijection between the sets of adjoint and coadjoint orbits of \(G\). More precisely, the adjoint orbit through \((\omega,v)\in\Delta\) corresponds under the bijection with the coadjoint orbit through \(\varphi(\omega,v)=(L(\omega),p(v))\in\Delta^*\). Furthermore, any two orbits corresponding under the bijection are vector bundles over a common base space, namely the manifold \(\orb^H_{\omega,v}\cong\orb^H_{L,p}\). In particular, orbits in bijection with each other have the same homotopy type.
\end{thm}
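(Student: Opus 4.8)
The plan is to assemble Theorem~\ref{bijection} from the machinery already in place, treating the bijection claim, the common-base-space claim, and the homotopy claim in turn. The bijection itself is essentially a corollary of Proposition~\ref{delta_sets}: since $\varphi\colon\Delta\to\Delta^*$ is an $H$-equivariant isomorphism, it descends to a bijection between $H$-orbits in $\Delta$ and $H$-orbits in $\Delta^*$. The discussion immediately preceding the theorem already identifies the set of adjoint (resp.\ coadjoint) $G$-orbits with the set of $H$-orbits in $\Delta$ (resp.\ $\Delta^*$), using the final clause of Proposition~\ref{delta_sets}, namely $\orb^G_{\omega,v}\cap\Delta=\orb^H_{\omega,v}$ and $\orb^G_{L,p}\cap\Delta^*=\orb^H_{L,p}$. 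So the first sentence of the proof just composes these three bijections and records that $(\omega,v)\mapsto\varphi(\omega,v)$ realises it at the level of representatives.

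Next I would establish the vector-bundle-over-a-common-base claim. By Proposition~\ref{delta_sets} every point of $\Delta$ and $\Delta^*$ is proper, so Theorem~\ref{orbit_geometry_thm} applies to the representatives $(\omega,v)$ and $(L,p)=\varphi(\omega,v)$. For points in the Cartan subsets the extra structure sharpens the picture: when $(\omega,v)\in\Delta$ we have $\omega v=0$, so $v\in\ker\omega$ and hence $\orb^H_{\omega,v}$ is genuinely an $H$-orbit through a point of $\Delta$, which the proof of Proposition~\ref{delta_sets} identifies with $\orb^G_{\omega,v}\cap\Delta$. The key point is that the vertical maps $\orb^G_{\omega,v}\to\orb^H_{\omega,v}$ and $\orb^G_{L,p}\to\orb^H_{L,p}$ from Figure~\ref{orbit_geometry_fig} have the same base, because the $H$-equivariant isomorphism $\varphi$ carries the $H$-orbit $\orb^H_{\omega,v}$ diffeomorphically onto $\orb^H_{L,p}$; this is exactly the identification $\orb^H_{\omega,v}\cong\orb^H_{L,p}$ asserted in the statement. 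I would then argue that the horizontal maps in Figure~\ref{orbit_geometry_fig} exhibit each $G$-orbit as a vector bundle over its $H$-orbit base: the horizontal arrows are already declared to be vector bundles with fibres $\orb^V_{\omega,v}\cong\Imag\omega$ and $\orb^V_{L,p}\cong\mathfrak{h}^\circ_p$, and composing with the vertical projections (whose fibres are the $H_\omega$- and $H_p$-orbits) is not quite what is wanted, so I would instead read off directly from the isotropy computation that $\orb^G_{\omega,v}\to\orb^H_{\omega,v}$ is a vector bundle with fibre $\Imag\omega$, and similarly $\orb^G_{L,p}\to\orb^H_{L,p}$ is a vector bundle with fibre $\mathfrak{h}^\circ_p$.

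The homotopy claim is then immediate: a vector bundle deformation-retracts onto its zero section, so each $G$-orbit is homotopy equivalent to its base $H$-orbit, and since the two bases are diffeomorphic via $\varphi$, the two $G$-orbits share a homotopy type. I would state this as the final line. The step I expect to require the most care is pinning down that the two vertical projections really do land on a \emph{common} base space in the strong sense claimed --- that is, verifying that the base of the adjoint-orbit bundle and the base of the coadjoint-orbit bundle are not merely abstractly diffeomorphic but identified through the single map $\varphi$, compatibly with the bundle structures. Concretely the obstacle is to check that $\varphi$ intertwines the bundle projection $\orb^G_{\omega,v}\to\orb^H_{\omega,v}$ with $\orb^G_{L,p}\to\orb^H_{L,p}$, i.e.\ that the fibre $\Imag\omega$ maps appropriately and the zero sections correspond; this uses the orthogonal decompositions $V=\Imag\omega\oplus\ker\omega$ and $\mathfrak{h}^*=\mathfrak{h}^\circ_p\oplus L(\mathfrak{h}_p)$ from the proof of Proposition~\ref{delta_sets}, together with the $H$-equivariance of $\varphi$. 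Everything else is bookkeeping with the exact sequences~\eqref{adjoint_isotrop} and~\eqref{Coad_isotrop} in their split, proper form.
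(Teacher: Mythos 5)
Your proposal is correct and takes essentially the same route as the paper: the bijection follows from the \(H\)-equivariance of \(\varphi\) together with Proposition~\ref{delta_sets}, the vector-bundle structures \(\orb^G_{\omega,v}\rightarrow\orb^H_{\omega,v}\) and \(\orb^G_{L,p}\rightarrow\orb^H_{L,p}\) are exactly the horizontal arrows of Theorem~\ref{orbit_geometry_thm} (these \emph{are} in Figure~\ref{orbit_geometry_fig}, despite your momentary horizontal/vertical mix-up), and the common base is the paper's observation that \(H_\omega=H_L\) and \(H_v=H_p\) give \(H_\omega\cap H_v=H_p\cap H_L\), hence \(\orb^H_{\omega,v}\cong\orb^H_{L,p}\). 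The compatibility check you flag as the hard step is actually unnecessary: the theorem only claims both orbits are vector bundles over a common base space, not that \(\varphi\) intertwines the two bundle projections or matches zero sections, so the isotropy-group identification (equivalently, the restriction of \(\varphi\) to the \(H\)-orbit) already suffices.
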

\begin{proof}
	Take \((\omega,v)\in\Delta\) and write \((L(\omega),p(v))\in\Delta^*\) as \((L,p)\). From Theorem~\ref{orbit_geometry_thm} the orbits \(\orb^G_{\omega,v}\) and \(\orb^G_{L,p}\) are both vector bundles over \(\orb^H_{\omega,v}\) and \(\orb^H_{L,p}\) with fibres isomorphic to \(\Imag\omega\) and \(\mathfrak{h}_p^\circ\) respectively. Since \(\omega\mapsto L(\omega)\) and \(v\mapsto p(v)\) are \(H\)-equivariant maps, \(H_\omega=H_L\) and \(H_v=H_p\). It follows that \(H_\omega\cap H_v=H_p\cap H_L\) and therefore \(\orb^H_{\omega,v}\cong\orb^H_{L,p}\).
\end{proof}
We obtain a stronger bijection result for the particular example of the (special) Euclidean group. We will show that for any two orbits corresponding under the bijection, one is either a vector or affine bundle over the other. To do this we need to first refine the Cartan sets by decomposing them into disjoint \(H\)-invariant subsets \(\Delta=\Delta_\mathfrak{h}\cup\overline{\Delta}\) and \(\Delta^*=\Delta_{\mathfrak{h}^*}\cup\overline{\Delta^*}\) given by
\begin{equation}\label{Delta_defn}
\begin{array}{ll}
\Delta_{\mathfrak{h}}=\left\{(\omega,0)~|~\omega\in\mathfrak{h}\right\}, & \overline{\Delta}=\left\{(\omega,v)\in\Delta~|~v\ne 0\right\}, \\
\Delta_{\mathfrak{h}^*}=\left\{(L,0)~|~\omega\in\mathfrak{h}^*\right\}, &	\overline{\Delta^*}=\left\{(L,p)\in\Delta^*~|~p\ne 0\right\}.
\end{array}
\end{equation}
Orbits through \(\overline{\Delta}\) and \(\overline{\Delta^*}\) might be described as being \emph{generic} orbits. Also note that the bijection pairs orbits in \(\Delta_{\mathfrak{h}}\) and \(\Delta_{\mathfrak{h}^*}\) with each other, likewise with \(\overline{\Delta}\) and \(\overline{\Delta^*}\).

Secondly, we recall that for a principal \(G\)-bundle \(P\rightarrow B\) (where \(G\) may now be any group) together with a space \(F\) upon which \(G\) acts, the associated fibre bundle \(B_F=P\times_G F\) is the set 
\[
	\text{$P\times F/\sim$, where $(p,f)\sim (pg^{-1},gf)$.}
\]
We obtain the following proposition for the special case when the base space \(B\) is a homogeneous space and where the fibre \(F\) is acted upon transitively.
\begin{propn}\label{associated_vector}
	Let \(B\) be a homogeneous \(G\)-space and \(G_b\) the isotropy subgroup for a given \(b\in B\). The map \(g\mapsto gb\) defines a principal \(G_b\)-bundle \(G\rightarrow B\). Suppose \(F\) is a space upon which \(G_b\) acts transitively. Then the associated fibre bundle \(B_F=G\times_{G_b}F\) admits a transitive action of \(G\) with isotropy subgroup isomorphic to \((G_b)_f\) for some \(f\in F\), and for which the map \(B_F\rightarrow F\) is \(G\)-equivariant.
\end{propn}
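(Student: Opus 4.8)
The plan is to produce the transitive \(G\)-action on \(B_F\) by hand from the explicit description of the associated bundle, and then to read off transitivity, the point stabiliser, and equivariance of the projection directly from the equivalence relation \((g,f)\sim(gh^{-1},hf)\), \(h\in G_b\), that defines \(B_F\). First I would define the action by left translation in the principal factor: for \(g'\in G\) put \(g'\cdot[g,f]=[g'g,f]\). Since the equivalence relation alters \(g\) only by right multiplication by elements of \(G_b\) (while simultaneously moving \(f\) by the \(G_b\)-action), and since left multiplication by \(g'\) commutes with right multiplication, the assignment respects \(\sim\) and hence descends to a genuine action of \(G\) on \(B_F\). This step is routine bookkeeping.

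The substantive step is transitivity, and it is precisely here that the hypothesis of \(G_b\) acting transitively on \(F\) is used. Given two classes \([g_1,f_1]\) and \([g_2,f_2]\), I would first invoke transitivity of \(G_b\) on \(F\) to choose \(h\in G_b\) with \(hf_1=f_2\), and then set \(g'=g_2 h g_1^{-1}\in G\). A short computation using the defining equivalence gives \(g'\cdot[g_1,f_1]=[g_2 h,f_1]=[g_2,hf_1]=[g_2,f_2]\), so the action is transitive. The point to be careful about is the direction of the two actions appearing in \(\sim\): without the ability to realign the fibre coordinate through \(G_b\) the two classes could not in general be joined, which is exactly why transitivity of \(G_b\) on \(F\) is the right hypothesis and is the only genuine obstacle in the argument.

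For the remaining assertions I would work at the distinguished point \([e,f]\), where \(e\in G\) is the identity and \(f\in F\) is the chosen base point. From \(g'\cdot[e,f]=[g',f]\) one sees that \([g',f]=[e,f]\) forces the existence of \(h\in G_b\) with \(g'h^{-1}=e\) and \(hf=f\); the first gives \(g'=h\in G_b\) and the second gives \(h\in(G_b)_f\). Hence the isotropy subgroup is \(G_{[e,f]}=(G_b)_f\), and consequently \(B_F\cong G/(G_b)_f\), which also exhibits the required subgroup up to isomorphism for the base point \(f\).

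Finally, I would verify equivariance of the bundle projection onto the base: define \(\pi\colon B_F\to B\) by \(\pi[g,f]=gb\). This is well defined because every \(h\in G_b\) fixes \(b\), so \(\pi[gh^{-1},hf]=gh^{-1}b=gb\), and it is \(G\)-equivariant since \(\pi(g'\cdot[g,f])=g'gb=g'\,\pi[g,f]\); its fibre over \(b\) is \(\{[e,f]\mid f\in F\}\cong F\), giving the desired \(G\)-equivariant projection. The whole argument is elementary, the transitivity step being the only place where the transitivity hypothesis on \(F\) is essential.
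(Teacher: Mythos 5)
Your proof is correct and follows essentially the same route as the paper's: the same left-translation action \(\widetilde{g}\cdot[(g,f)]=[(\widetilde{g}g,f)]\), with transitivity obtained by using the \(G_b\)-action to realign the fibre coordinate (the paper phrases this as normalising every class to the form \([(g,f)]\) for one fixed \(f\)), and the same identification of the stabiliser of \([(e,f)]\) with \((G_b)_f\). You additionally spell out well-definedness and \(G\)-equivariance of the projection \(\pi\colon B_F\to B\), \(\pi[(g,f)]=gb\) --- sensibly reading the statement's map ``\(B_F\rightarrow F\)'' as a typo for \(B_F\rightarrow B\) --- a point the paper's own proof leaves unverified.
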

\begin{proof}
	The action of \(G\) on \(B_F\) is given by \(\widetilde{g}\cdot\left[(g,f)\right]=\left[(\widetilde{g}g,f)\right]\) and is well defined. As \(G_b\) acts transitively on \(F\) every element in \(B_F\) may be denoted by an equivalence class of the form \([(g,f)]\) for any fixed \(f\in F\). From this it follows that the action of \(G\) is transitive with the isotropy subgroup of \([(e,f)]\) equal to \((G_b)_f\).
\end{proof}
In particular, we remark that when the fibre \(F\) is an affine space, and the action of \(G_b\) on \(F\) is that of an affine representation, then \(B_F\) is an affine bundle. Recall that an affine bundle is one where scalar multiplication and subtraction is defined on the fibres, but unlike a vector bundle, there is no canonical choice of zero section.

\begin{thm}\label{main_thm}
	Let \(G\) be the (special) Euclidean group. The following holds.
	\begin{enumerate}
		\item Adjoint orbits through \((\omega,0)\in\Delta_{\mathfrak{h}}\) are \(G\)-equivariant vector bundles over the corresponding coadjoint orbit through \((L,0)\in\Delta_{\mathfrak{h}^*}\) with fibres isomorphic to \(\Imag\omega\).
		
		\item Coadjoint orbits through \((L,p)\in\overline{\Delta^*}\) are \(G\)-equivariant affine bundles over the corresponding adjoint orbit through \((\omega,v)\in\overline{\Delta}\) with fibres isomorphic to the quotient \(\ker\omega/\ker\tau_p\).
	\end{enumerate}
\end{thm}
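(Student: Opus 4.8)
The plan is to prove both parts by the same two-step recipe: first compute the two isotropy subgroups and show that one contains the other, which immediately produces a $G$-equivariant fibration between the orbits with the claimed fibre; then identify the linear (resp. affine) structure on the fibre. The common input is Proposition~\ref{delta_sets}, which guarantees that every point of $\Delta,\Delta^*$ is proper so that the isotropy groups are the semidirect products \eqref{Ad_isotropygroup_proper} and \eqref{Coad_proper_isotropygroup}; and Theorem~\ref{bijection}, whose equivariant isomorphisms give $H_\omega=H_L$ and $H_v=H_p$, so that the ``rotational'' factors of the two isotropy groups coincide. I will abbreviate $K:=H_\omega\cap H_v=H_L\cap H_p$.

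For part~(1) I set $v=0,\ p=0$. Then $H_p=H$, the restriction $\iota^*_p$ is the identity and $\ker\tau_p=V$, so $G_{\omega,0}=H_\omega\ltimes\ker\omega$ while $G_{L,0}=H_\omega\ltimes V$. Since $\ker\omega\subseteq V$ this gives the inclusion $G_{\omega,0}\subseteq G_{L,0}$ and hence a $G$-equivariant projection $\orb^G_{\omega,0}\to\orb^G_{L,0}$ with fibre $G_{L,0}/G_{\omega,0}\cong V/\ker\omega\cong\Imag\omega$, the last isomorphism from the orthogonal splitting $V=\Imag\omega\oplus\ker\omega$. To exhibit a genuine vector bundle I would read the orbit off \eqref{alt_Ad_action}, namely $\Ad_{(r,d)}(\omega,0)=(\Ad_r\omega,-(\Ad_r\omega)d)$, giving
\[
\orb^G_{\omega,0}=\{(\mu,w)\mid \mu\in\orb^H_\omega,\ w\in\Imag\mu\},
\]
which is visibly the vector subbundle of $\orb^H_\omega\times V$ whose fibre over $\mu$ is the linear subspace $\Imag\mu$. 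Its canonical zero section $\mu\mapsto(\mu,0)$ is the embedded $H$-orbit $\orb^H_{\omega,0}$, and the projection to $\orb^H_\omega\cong\orb^G_{L,0}$ is $G$-equivariant, which settles (1).

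For part~(2) the decisive step is the inclusion $\ker\tau_p\subseteq\ker\omega$. Writing $p=p(v)$ and using that every $\xi\in\mathfrak{h}$ is skew for $B_V$, I would show $\ker\tau_p=\{d\mid B_V(\xi v,d)=0\ \forall\xi\}=(\mathfrak{h}v)^\perp$. For the Euclidean group $\mathfrak{h}v=v^\perp$, and as $(\omega,v)\in\overline{\Delta}$ satisfies $\omega v=0$ we have $v\in\ker\omega$, so $\Imag\omega=(\ker\omega)^\perp\subseteq v^\perp=\mathfrak{h}v$; taking complements yields $\ker\tau_p\subseteq\ker\omega$. Hence $G_{L,p}=K\ltimes\ker\tau_p\subseteq K\ltimes\ker\omega=G_{\omega,v}$, giving a $G$-equivariant projection $\orb^G_{L,p}\to\orb^G_{\omega,v}$ with fibre $G_{\omega,v}/G_{L,p}\cong\ker\omega/\ker\tau_p$ (the common factor $K$ cancels). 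I then apply Proposition~\ref{associated_vector} with $B=\orb^G_{\omega,v}$, $G_b=G_{\omega,v}=K\ltimes\ker\omega$ and $F=\ker\omega/\ker\tau_p$ carrying the transitive affine action $(k,d')\cdot[d]=[kd+d']$; by the remark following that proposition $\orb^G_{L,p}=G\times_{G_{\omega,v}}F$ is a $G$-equivariant affine bundle. To confirm it is genuinely affine, not a vector bundle, I would inspect the fibre over $(L,p)$ inside $\mathfrak{g}^*$, which by \eqref{alt_Coad_action} is the affine subspace $L+\mathfrak{h}^\circ_p$; since membership in $\overline{\Delta^*}$ forces $L\in L(\mathfrak{h}_p)=(\mathfrak{h}^\circ_p)^\perp$, for $L\ne0$ this subspace avoids the origin and admits no canonical zero section.

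The main obstacle is exactly the inclusion $\ker\tau_p\subseteq\ker\omega$ in (2): it rests on the identity $\mathfrak{h}v=v^\perp$, equivalently on $\Imag\omega\subseteq v^\perp$ whenever $\omega v=0$, a feature of $\mathfrak{so}(n)$ that fails for a general group of Euclidean type—precisely why this bundle refinement is special to the Euclidean group. Beyond this the work is bookkeeping: keeping straight which isotropy group contains the other so that each bundle points in the asserted direction, and distinguishing the canonical vector-bundle structure of (1), supplied by the linear subspaces $\Imag\mu\subseteq V$, from the merely affine structure of (2).
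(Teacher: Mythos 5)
Your proof is correct and follows essentially the paper's own route: for Part~1 you identify \(\orb^G_{L,0}\) with \(\orb^H_\omega\) and produce the horizontal vector bundle of Theorem~\ref{orbit_geometry_thm} (your explicit description \(\orb^G_{\omega,0}=\{(\mu,w)\mid\mu\in\orb^H_\omega,\ w\in\Imag\mu\}\) just makes that bundle concrete), and for Part~2 you compute \(\ker\tau_p=\Span\{v\}\) --- your \((\mathfrak{h}v)^\perp\) calculation is the paper's sphere-orbit argument in dual form --- and then apply Proposition~\ref{associated_vector} with the same transitive affine action \((r,d)\cdot[k]=[rk+d]\) on \(\ker\omega/\ker\tau_p\), matching isotropy subgroups exactly as the paper does. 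One harmless slip in your closing aside: the fibre of \(\orb^G_{L,p}\to\orb^G_{\omega,v}\) over \((\omega,v)\) is \(\bigl(L+\tau_p(\ker\omega)\bigr)\times\{p\}\), not \(L+\mathfrak{h}_p^\circ\) --- the latter is the \(V\)-orbit, i.e.\ the fibre of the horizontal map \(\orb^G_{L,p}\to\orb^H_{L,p}\) in Figure~\ref{orbit_geometry_fig} --- but nothing depends on this, since the affine (as opposed to vector) bundle structure is already supplied by Proposition~\ref{associated_vector} and the remark following it.
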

\begin{proof}
	Once again, as \(\omega\mapsto L(\omega)\) and \(v\mapsto p(v)\) are \(H\)-equivariant maps, we have that \(H_\omega=H_L\) and \(H_v=H_p\). Therefore, with reference to \eqref{alt_Coad_action}, the orbit \(\orb^G_{L,0}\) may be identified with the adjoint orbit \(\orb^H_\omega\). For Part 1, the desired bundle \(\orb^G_{\omega,0}\rightarrow\orb^G_{L,0}\cong\orb^H_\omega\) is the same as that given in Figure~\ref{orbit_geometry_fig}.

	Before addressing Part 2 we must first determine what the space \(\ker\tau_p\) is equal to. By unpacking the definitions of \(\tau_p\) and \(B_V\), observe that \(x\) belongs to \(\ker\tau_p\) if and only if \(B_V(\xi v,x)=0\) for all \(\xi\in\mathfrak{h}\). For when \(H=SO(n)\) or \(O(n)\), the orbit of \(H\) through \(v\) is a sphere of radius \(B_V(v,v)\). Therefore, \(\ker\tau_p\) is equal to the one-dimensional span of \(v\), and consequently is fixed by \(H_p=H_v\).
	
	For Part 2 write \(b=(\omega,v)\) and let \(B=\orb^G_{\omega,v}\). Consider the principal \(G_b\)-bundle \(G\rightarrow B\) where \(G_b=G_{\omega,v}=(H_\omega\cap H_v)\ltimes\ker\omega\). 
	The group \(H_\omega\cap H_v\) preserves both \(\ker\omega\) and \(\ker\tau_p\), and thus induces an affine representation of \(G_b\) on the quotient \(F=\ker\omega/\ker\tau_p\) given by \((r,d)\cdot[k]=[rk+d]\). This representation is transitive, and the stabiliser of \(f=[0]\) is the subgroup \((H_\omega\cap H_v)\ltimes\ker\tau_p\). By Proposition~\ref{associated_vector}, the associated affine bundle \(B_F\) admits a transitive \(G\)-action with isotropy subgroup \((H_\omega\cap H_v)\ltimes\ker\tau_p\). But this is equal to \(G_{L,p}=(H_p\cap H_L)\ltimes\ker\tau_p\), the stabiliser of the coadjoint orbit through \((L,p)\). Therefore \(B_F\) may be identified with \(\orb^G_{L,p}\) and we are done.
\end{proof}
This theorem may be checked against the example in Figure~\ref{se2} for \(SE(2)\). Here observe that \(\Delta_{\mathfrak{h}}\) is the \(\omega\)-axis, and \(\overline{\Delta}\) is the \(v_xv_y\)-plane minus the origin.

\section{Orbits of the Euclidean group} \label{sec:Euclidean}
It is well known that the adjoint orbits of the unitary group \(U(n)\) are equal to the manifolds of complex flags in \(\mathbb{C}^n\). However, the literature does not appear to address the exact nature of the adjoint orbits of the orthogonal group. In this section we review the definition of a flag manifold and introduce new \emph{Hermitian} and \emph{affine} flags endowed with additional structure. The adjoint and coadjoint orbits of the orthogonal and Euclidean groups turn out to be examples of such manifolds. We begin by recalling a geometric definition for the Euclidean group which will serve us throughout this section.
\begin{defn}
	For an affine space \(A\), the \emph{Euclidean group} \(E(A)\) is the group of affine-linear isomorphisms which preserves a given Euclidean distance. If the group also preserves an orientation on \(A\) we write the \emph{special Euclidean group} as \(SE(A)\).
\end{defn}
\subsection{Linear flags}
\begin{defn}
	A \emph{flag} \(F\) in \(V=\mathbb{R}^n\) is a strictly ascending sequence of subspaces beginning with \(\{0\}\) and ending with \(V\) given by
	\begin{equation}
	\label{flag_ordinary}
	F=\left(\{0\}=E_0\subsetneq E_1\subsetneq\dots\subsetneq E_k=V\right).
	\end{equation}
	We will call the subspaces \(E_i\) the \emph{flag subspaces}. The \emph{rank} of the flag \(F\) is the number \(k\) of non-zero flag subspaces. Let \(d_i\) denote the dimension of \(E_i/E_{i-1}\); the tuple \(\sigma=(d_1,\dots,d_k)\) is the \emph{signature} of the flag \(F\). The set of all flags in \(V\) of a given signature \(\sigma\) will be written as \(\F(\sigma)\). These sets generalise the notion of projective spaces and Grassmannians. For example, in our notation we have \(\mathbb{R}P^{n-1}=\F(1,n-1)\) and \(\Gr_\mathbb{R}(k;n-k)=\F(k,n-k)\).
\end{defn}

We now suppose that \(V\) is equipped with an inner product. A flag \(F\) now determines a unique ordered sequence of mutually orthogonal subspaces \((V_1,\dots, V_k)\) where \(V_1=E_1\) and \(E_{j+1}=E_j\oplus V_{j+1}\) for \(1\le j\le k\). We will call the subspaces \(V_j\) the \emph{flag components} of \(F\), and note that \(\dim V_j=d_j\).  Conversely, observe that an ordered sequence of orthogonal subspaces which span \(V\) uniquely determines a flag with these subspaces as flag components.

These flags may be equipped with additional structure. For instance, we may prescribe an orientation on a given flag component \(V_j\), or equivalently on the quotient \(E_j/E_{j-1}\). Should a flag possess this attribute we will write the signature as \((d_1,\dots,\widetilde{d_j},\dots,d_k)\) where the tilde indicates that the flag component \(V_j\) receives an orientation. Flags may also be endowed with an orthogonal complex structure on a given \(V_j\), or equivalently on \(E_j/E_{j-1}\); that is, a linear map \(\mathbb{J}_j\colon V_j\rightarrow V_j\) with \(\mathbb{J}_j^2=-I\) which preserves the inner product. We will write the signature of such a flag as  \((d_1,\dots,d_j^\mathbb{C},\dots,d_k)\), where the raised \(\mathbb{C}\) indicates that \(V_j\) (which is necessarily even-dimensional) has been endowed with an orthogonal complex structure. We will refer to these flags with additional structure as \emph{Hermitian flags}.

The set of all such flags of a given signature admits a transitive action by the orthogonal group \(O(V)\) preserving the inner product. For any \(r\) we define \(rF\) to be the flag whose flag subspaces are given by \(rE_j\). Equivalently, the flag components \(V_j\) determined by \(F\) are sent to \(rV_j\). In addition, if a flag component \(V_j\) is equipped with either an orientation or complex structure \(\mathbb{J}_j\), then the corresponding orientation on \(rV_j\) is the induced orientation, and the complex structure given by \(r\circ\mathbb{J}_j\circ r^{-1}\). This action is transitive and allow us to write these sets as homogeneous spaces
\begin{align}
\F(d_1,\dots,\widetilde{d_j},\dots,d_k^\mathbb{C})&=\frac{O(V)}{O(V_1)\times\dots \times SO(V_j)\times\dots \times U(V_k)}.
\end{align}
Let \(F\) be a flag of rank \(k\) as in equation~\eqref{flag_ordinary} of composite signature \((\sigma,\tau)\); here the signature \(\sigma\) refers to the first \(l\) flag subspaces of \(F\) and \(\tau\) the remaining \(k-l\) subspaces for a given \(l\le k\). There is a bundle map \(\F(\sigma,\tau)\longrightarrow\F(\sigma)\) given by sending the flag \(F\) to the flag \(E_0\subsetneq E_1\subsetneq\dots\subsetneq E_l\subsetneq V\); effectively `forgetting' the higher flag subspaces given by \(\tau\). The fibre containing \(F\) is equal to the flag manifold of flags in \(V/E_l\) of signature \(\tau\). We thus have the following equivariant fibre-bundle for flag manifolds:
\begin{equation}\label{ord_flag_bundle}
	\begin{tikzcd}
	\F(\tau)\arrow[r,hook] & \F(\sigma,\tau)\arrow[d,two heads] \\ & \F(\sigma)
\end{tikzcd}
\end{equation}
\begin{thm}[Orbits of $O(n)$] \label{on_orbits}
Any adjoint or coadjoint orbit of \(O(n)\) is equivariantly diffeomorphic to a manifold of Hermitian flags in \(\mathbb{R}^n\).
\end{thm}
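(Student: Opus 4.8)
The plan is to reduce the coadjoint case to the adjoint case and then realise every adjoint orbit of \(O(n)\) as a manifold of Hermitian flags by means of the real spectral decomposition of a skew-symmetric operator. Since \(O(n)\) is compact, the invariant inner product \(B_\mathfrak{h}\) on \(\mathfrak{h}=\mathfrak{o}(n)\) gives an \(O(n)\)-equivariant isomorphism \(\mathfrak{h}\cong\mathfrak{h}^*\) carrying adjoint orbits diffeomorphically onto coadjoint orbits, so it suffices to treat the adjoint orbits. Identifying \(\mathfrak{o}(n)\) with the skew-symmetric operators on \(V=\mathbb{R}^n\), the adjoint action is conjugation \(\Ad_r\omega=r\omega r^{-1}\) and each orbit is an isospectral set.

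First I would build the flag attached to a given \(\omega\). The operator \(\omega^2\) is symmetric and negative semidefinite, so \(V\) splits orthogonally as
\[
V=\ker\omega\oplus W_1\oplus\dots\oplus W_m,\qquad W_j=\ker\left(\omega^2+\lambda_j^2 I\right),
\]
where \(0<\lambda_1<\dots<\lambda_m\) are the distinct nonzero values. On each \(W_j\) the map \(\mathbb{J}_j:=\lambda_j^{-1}\,\omega|_{W_j}\) satisfies \(\mathbb{J}_j^2=-I\) and \(\mathbb{J}_j^{T}=-\mathbb{J}_j\), hence is an orthogonal complex structure and \(W_j\) is even-dimensional. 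Ordering the summands (say the kernel first, then by increasing \(\lambda_j\), and dropping the kernel component when \(\omega\) is invertible), I assign to \(\omega\) the Hermitian flag whose flag components are \((\ker\omega, W_1,\dots,W_m)\), with \(W_j\) carrying \(\mathbb{J}_j\) and the kernel carrying no extra structure.

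Next I would check equivariance and conclude through the isotropy subgroups. Under \(\omega\mapsto r\omega r^{-1}\) the eigenspaces transform as \(W_j\mapsto rW_j\) and the complex structures as \(\mathbb{J}_j\mapsto r\mathbb{J}_j r^{-1}\); this is precisely the \(O(V)\)-action on Hermitian flags defined earlier, so the assignment is \(O(n)\)-equivariant, and because the eigenvalue data is constant along an orbit its image lies in a single flag manifold of fixed signature. Both the orbit and this flag manifold are transitive \(O(n)\)-spaces, so the equivariant map is a diffeomorphism exactly when the two isotropy subgroups coincide. The stabiliser of \(\omega\) is its centraliser in \(O(n)\), which must preserve \(\ker\omega\) and each \(W_j\) and commute with \(\omega\), hence with \(\mathbb{J}_j\), on \(W_j\).

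The main obstacle is this last identification: I must verify that on each \(W_j\) an orthogonal operator commuting with \(\omega\) is exactly a \(\mathbb{C}\)-linear isometry for \(\mathbb{J}_j\), so that the centraliser is literally \(O(\ker\omega)\times U(W_1)\times\dots\times U(W_m)\) and not merely abstractly isomorphic to it. Comparing this with the isotropy subgroup of the corresponding Hermitian flag read off from the homogeneous-space presentation then gives equality of stabilisers and hence the desired equivariant diffeomorphism. Everything else---smoothness, well-definedness of the spectral decomposition, and surjectivity---follows formally from transitivity and equivariance.
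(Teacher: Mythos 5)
Your proposal is correct and follows essentially the same route as the paper: the flag components \(W_j=\ker\left(\omega^2+\lambda_j^2 I\right)\) you obtain from the real spectral decomposition of \(\omega^2\) are exactly the paper's \(V_j=\left(E_{+\lambda_j}\oplus E_{-\lambda_j}\right)\cap\mathbb{R}^n\) built via complexification, with the same complex structures \(\lambda_j^{-1}\omega\) and the same equivariant identification of the orbit with \(\F(d_0,d_1^\mathbb{C},\dots,d_k^\mathbb{C})\), and the same reduction between adjoint and coadjoint orbits via the invariant isomorphism \(\mathfrak{h}\cong\mathfrak{h}^*\). Your explicit verification that the centraliser of \(\omega\) in \(O(n)\) is literally \(O(\ker\omega)\times U(W_1)\times\dots\times U(W_m)\) is a correct filling-in of a step the paper leaves implicit in the phrase ``therefore defines an equivariant diffeomorphism.''
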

\begin{proof}
	Let \(\omega\) belong to a given adjoint orbit through \(\mathfrak{h}=\mathfrak{so}(n)\). Since \(\omega\) is a skew-symmetric matrix it admits an orthogonal eigenspace decomposition with respect to its complexified action on \(\mathbb{C}^n\), along with purely imaginary eigenvalues. Moreover, non-zero eigenvalues appear in opposite pairs; that is, if \(i\lambda_j\) is an eigenvalue of \(\omega\) then so is \(-i\lambda_j\) with the same multiplicity. Let \(E_{\pm \lambda_j}\subset\mathbb{C}^n\) denote the eigenspaces for a distinct non-zero eigenvalue pair \(\pm i\lambda_j\). Observe that the action of \(\omega\) restricted to the real subspace \(V_j=\left(E_{+\lambda_j}\oplus E_{-\lambda_j}\right)\cap\mathbb{R}^n\) squares to minus the identity multiplied by \(\lambda_j^2\). Order the eigenvalues of \(\omega\) so that the non-zero distinct eigenvalue pairs \(\pm i\lambda_1,\dots,\pm i\lambda_k\) satisfy  \(|\lambda_1|<\dots<|\lambda_k|\) and where, should \(\ker\omega\ne\{0\}\), write the zero eigenvalue as \(\lambda_0=0\). It can now be seen that \(\omega\) uniquely determines a flag \(F\) in \(\mathbb{R}^n\) with flag components given by 
	\begin{equation}
	\label{son_flag}
	\left(\ker\omega,V_1,\dots,V_k\right),
	\end{equation}
	together with orthogonal complex structures on each \(V_j\) given by restriction of \(\lambda_j^{-1}\omega\). This correspondence between elements in \(\mathfrak{so}(n)\) and Hermitian flags is equivariant with respect to \(O(n)\), and therefore defines an equivariant diffeomorphism between the adjoint orbit through \(\omega\) with the manifold \(\F(d_0,d_1^\mathbb{C},\dots,d_k^\mathbb{C})\). Finally, since the isomorphism \(\omega\mapsto L\) between \(\mathfrak{so}(n)\) and \(\mathfrak{so}(n)^*\) is \(O(n)\)-equivariant, the coadjoint orbit through \(L=L(\omega)\) may be identified with the adjoint orbit through \(\omega\).
\end{proof}
More generally, the adjoint orbits of a semisimple Lie group go by the names of \emph{generalised flag varieties} or simply just \emph{flag manifolds} \cite{alek}. Though we do not show it here, the adjoint orbits of the compact symplectic group \(Sp(n)\), the group of isomorphisms of quaternionic `vector space' \(\mathbb{H}^n\) preserving a Hermitian form,  may also be described as a manifold of flags. Analogously to the case for the orthogonal group, the flag manifolds in question consist of quaternionic flags with a designated choice of complex structure assigned to certain flag components.

\subsection{Affine flags}
\begin{defn}
	An \emph{affine flag} in \(V\) is a strictly ascending sequence of affine subspaces terminating with \(V\).
	\begin{equation}
	\label{ord_aff_flag}
	F=\left(A_1\subsetneq\dots\subsetneq A_k=V\right)
	\end{equation}
	The affine subspaces \(A_j\) will also be referred to as the flag subspaces of \(F\). Recall that any affine subspace \(A_j\) determines an \emph{associated subspace} \([A_j]\) given by \(A_j-v\) for any \(v\in A_j\), and that this does not depend on the choice of \(v\). In this way, an affine flag \(F\) determines an \emph{associated flag} \([F]\) given as in equation~\eqref{flag_ordinary} where \(E_j=[A_j]\). If \([F]\) has signature \((d_0,d_1,\dots,d_k)\), then \(F\) is defined to have signature \((d_0;d_1,\dots,d_k)\). The set of all affine flags of a given signature \(\sigma\) will be written as \(\aff(\sigma)\).
\end{defn}
The manifold \(\aff(\sigma)\) admits a transitive action by the Euclidean group. Here \((r,d)\in E(V)\) sends the flag \(F\) to \(rF+d\), the affine flag obtained by sending each flag subspace \(A_j\) to \(rA_j+d\). The induced action of \((r,d)\) on the associated flag \([F]\) is \(r[F]\), that is \([rF+d]=r[F]\). It follows that these manifolds are homogeneous spaces
\begin{equation}
\label{aff_hermitian_flag}
\aff(\widetilde{d_1};\dots d_j,\dots,d_k^\mathbb{C})=\frac{E(V)}{SE(A_1)\times\dots\times O(V_j)\times\dots\times U(V_k)}.
\end{equation}
 The map \(F\mapsto [F]\) gives an equivariant vector bundle over \(\F(\sigma)\) whose fibre above \([F]\) is equal to all distinct translates of the subspace \(E_1\) (which we might like to call the \emph{flag pole} of the flag \(F\)). This is isomorphic to the vector space \(V/E_1\).
\begin{equation}\label{aff_flag_vbundle}
\begin{tikzcd}
V/E_1\arrow[r,hook] & \aff(\sigma)\arrow[d,two heads] \\ & \F(\sigma)
\end{tikzcd}
\end{equation}
The zero section of this vector bundle is the subset of ordinary linear flags viewed as affine flags; that is, those affine flags \(F\) with \([F]=F\).

In much the same way as we constructed the fibre bundle in \eqref{ord_flag_bundle}, we may apply the same reasoning to affine flags and establish the equivariant fibre bundle
\begin{equation}\label{aff_flag_bundle}
\begin{tikzcd}
\F(\tau)\arrow[r,hook] & \aff(\sigma,\tau)\arrow[d,two heads] \\ & \aff(\sigma)
\end{tikzcd}
\end{equation}
\begin{thm}[Coadjoint orbits of $E(n)$]  \label{en_coad_orbits}
	All coadjoint orbits of \(G=E(n)\) intersect the sets \(\Delta_{\mathfrak{h}^*}\) and \(\overline{\Delta^*}\) as given in \eqref{Delta_defn}. Let \(L=L(\omega)\), where \(\omega\) is the element in \(\mathfrak{so}(n)\) determining the flag components in \eqref{son_flag}. The orbit through \((L,0)\in\Delta_{\mathfrak{h}^*}\) is equivariantly diffeomorphic to \(\F(d_0,d_1^\mathbb{C},\dots,d_k^\mathbb{C})\), the orbit through \((L,p)\in\overline{\Delta^*}\) is equivariantly diffeomorphic to \(\aff(\widetilde{1};d_0-1,d_1^\mathbb{C},\dots,d_k^\mathbb{C})\) and the orbit \(\orb^H_{L,p}\) is \(H\)-equivariantly diffeomorphic to \(\F(\widetilde{1},d_0-1,d_1^\mathbb{C},\dots,d_k^\mathbb{C})\).
\end{thm}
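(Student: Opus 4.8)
The plan is to treat the three assertions in turn, reducing each to a homogeneous-space identification whose stabiliser I compute explicitly, leaning on Theorem~\ref{on_orbits} for the Hermitian-flag structure already attached to $\omega$. The intersection claim is immediate: by Proposition~\ref{delta_sets} every coadjoint orbit of $G$ meets $\Delta^*$, and since $\Delta^*=\Delta_{\mathfrak{h}^*}\cup\overline{\Delta^*}$ is the disjoint decomposition of \eqref{Delta_defn}, each orbit meets exactly one of the two pieces according to whether its $V^*$-component vanishes. For the orbit through $(L,0)\in\Delta_{\mathfrak{h}^*}$, setting $p=0$ in \eqref{alt_Coad_action} gives $\Ad^*_{(r,d)}(L,0)=(\Ad^*_r L,0)$, so $V$ acts trivially and $\orb^G_{L,0}\cong\orb^H_L$ as $G$-spaces. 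Since $\omega\mapsto L(\omega)$ is $H$-equivariant, $\orb^H_L\cong\orb^H_\omega$, which by Theorem~\ref{on_orbits} is equivariantly $\F(d_0,d_1^{\mathbb{C}},\dots,d_k^{\mathbb{C}})$; the $G$-equivariance is retained because $V$ acts trivially on this flag manifold.

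Next I would identify the base orbit $\orb^H_{L,p}$ for $(L,p)\in\overline{\Delta^*}$. Writing $(L,p)=\varphi(\omega,v)$ with $v\neq 0$ and $\omega v=0$, the $H$-equivariance of $\varphi$ gives $\orb^H_{L,p}\cong\orb^H_{\omega,v}$, so it suffices to describe the latter. The key point is that the extra datum $v\in\ker\omega$ refines the flag of Theorem~\ref{on_orbits}: I would split $\ker\omega=\Span(v)\oplus(v^\perp\cap\ker\omega)$ orthogonally and orient the line $\Span(v)$ by $v$, obtaining the Hermitian flag with components $(\Span(v),\,v^\perp\cap\ker\omega,\,V_1,\dots,V_k)$, the orientation on $\Span(v)$, and the complex structures $\lambda_j^{-1}\omega$ on the $V_j$ as before. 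This assignment is $H$-equivariant, and on the orbit $|v|$ is constant, so $v$ is recovered from the oriented line; hence the map is a bijection onto $\F(\widetilde{1},d_0-1,d_1^{\mathbb{C}},\dots,d_k^{\mathbb{C}})$. Concretely one checks the stabilisers agree: the centraliser $H_\omega$ equals $O(\ker\omega)\times\prod_j U(V_j)$, and intersecting with $H_v$, which fixes $v$ and hence acts as $O(v^\perp\cap\ker\omega)=O(d_0-1)$ on the relevant summand, yields $O(d_0-1)\times\prod_j U(V_j)$, precisely the isotropy group of the flag read off from \eqref{aff_hermitian_flag} (with $SO$ of the oriented line trivial).

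Finally, for the affine orbit $\orb^G_{L,p}$ I would argue by matching stabilisers inside $G=E(V)$. By Proposition~\ref{delta_sets} the point is proper, so by \eqref{Coad_proper_isotropygroup} its isotropy group is $G_{L,p}=(H_\omega\cap H_v)\ltimes\ker\tau_p$, and as computed in the proof of Theorem~\ref{main_thm} we have $\ker\tau_p=\Span(v)$. I would then exhibit the base affine flag $F_0\in\aff(\widetilde{1};d_0-1,d_1^{\mathbb{C}},\dots,d_k^{\mathbb{C}})$ whose underlying linear flag is the one of the previous paragraph, placed through the origin, and compute its Euclidean stabiliser directly from the action $(r,d)\cdot F=rF+d$: an element $(r,d)$ fixes $F_0$ if and only if $r$ preserves the oriented linear flag, i.e. $r\in H_\omega\cap H_v$, and $d$ lies in every flag subspace, i.e. $d\in\Span(v)$. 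Thus $\stab(F_0)=(H_\omega\cap H_v)\ltimes\Span(v)=G_{L,p}$ as subgroups of $E(V)$. Since $E(V)$ acts transitively on $\aff(\widetilde{1};d_0-1,\dots)$, the equality of stabilisers yields the desired $G$-equivariant diffeomorphism $\orb^G_{L,p}\cong G/G_{L,p}\cong\aff(\widetilde{1};d_0-1,d_1^{\mathbb{C}},\dots,d_k^{\mathbb{C}})$.

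The main obstacle I anticipate is the careful bookkeeping of the isotropy groups: one must verify that the centraliser $H_\omega$ really is $O(\ker\omega)\times\prod_j U(V_j)$, and that fixing the \emph{vector} $v$ rather than merely the line, which is what forces the orientation and the separation of the $\widetilde{1}$ and $d_0-1$ components, cuts this down to exactly the flag stabiliser; one then checks that the affine translations fixing $F_0$ are precisely $\Span(v)=\ker\tau_p$ and no more. Once these identifications of stabilisers are pinned down, the transitivity statements recorded in \eqref{aff_hermitian_flag} and Proposition~\ref{delta_sets} turn them into equivariant diffeomorphisms with no further work.
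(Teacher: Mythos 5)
Your proposal is correct and takes essentially the same route as the paper: the orbit through \((L,0)\) is identified with \(\orb^H_L\cong\orb^H_\omega\) and handled by Theorem~\ref{on_orbits}, while for \((L,p)\in\overline{\Delta^*}\) properness gives \(G_{L,p}=(H_\omega\cap H_v)\ltimes\ker\tau_p\) with \(\ker\tau_p=\Span\{v\}\), which is then matched with the stabiliser of the affine flag with oriented flag pole and associated components \eqref{finerflag}. Your direct verification of the flag stabiliser (that fixing the oriented line forces \(rv=v\) and that the admissible translations are exactly \(\Span\{v\}\)) merely spells out what the paper reads off from \eqref{aff_hermitian_flag}, so no substantive difference or gap.
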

\begin{proof}
	From the coadjoint action given in \eqref{alt_Coad_action}, observe that the orbit through \((L,0)\) is equal to the orbit \(\orb^H_L\). This orbit is identified with \(\orb^H_\omega\) and therefore the result for \(\Delta_{\mathfrak{h}^*}\) follows from Theorem~\ref{on_orbits}.
	
	For \((L,p)\in\overline{\Delta^*}\) the isotropy subgroup is given in \eqref{Coad_proper_isotropygroup} by \(G_{L,p}=(H_p\cap H_L)\ltimes\ker\tau_p\). From the proof of Theorem~\ref{main_thm}, this is equal to \((H_v\cap H_\omega)\ltimes\spoon \{v\}\), where \(v\) is the non-zero element in \(\ker\omega\) with \(p=p(v)\). Therefore, the subgroup \(H_\omega\cap H_v\) consists of those elements which preserve the flag components
	\begin{equation}
	\label{finerflag}
	(\spoon\{v\},K_0,V_1,\dots, V_k)
	\end{equation}
	together with the complex structures defined on each \(V_j\). Here the one-dimension flag component \(\spoon \{v\}\) is oriented (in the direction of \(+v\)) and \(K_0\) is the (possibly trivial) \((d_0-1)\)-dimensional subspace orthogonal to \(v\) satisfying \(\ker\omega=\spoon v\oplus K_0\). From equation~\eqref{aff_hermitian_flag} we recognise that the group \(G_{L,p}\) is the stabiliser subgroup of the affine flag \(F\) as in \eqref{ord_aff_flag} with associated flag components given by \eqref{finerflag}, and for where \(A_1=\spoon \{v\}\). There is therefore an \(E(n)\)-equivariant diffeomorphism from the orbit through \((L,p)\in\overline{\Delta^*}\) with \(\aff(\widetilde{1};d_0-1,d_1^\mathbb{C},\dots,d_k^\mathbb{C})\) given by sending \(\Ad_{(r,d)}^*(L,p)\) to \(rF+d\). The result for the orbit \(\orb^H_{L,p}\) is obtained similarly, by remarking that the isotropy subgroup is \(H_\omega\cap H_v\).
\end{proof}
Armed with this result, we can test it out to obtain the well-known coadjoint orbits of \(E(3)\): the point orbit, spheres, and tangent bundles to spheres \cite[Theorem 4.4.1]{bigstages}. Before showing this, we recall that the manifold of oriented affine lines \(\aff(\widetilde{1};n-1)\) may canonically be identified with the tangent bundle to the unit sphere \(S^{n-1}\subset\mathbb{R}^n\). An oriented affine line \(l\) determines an associated oriented one-dimensional subspace \([l]\). This in turn determines a unique point \(v\in S^{n-1}\). The correspondence identifies the line \(l\) with the intersection of \(l\) with the tangent space \(T_vS^{n-1}\subset\mathbb{R}^n\).

\begin{cor}\label{cor1}
	For \(G=E(3)\), the coadjoint orbit through \((L,0)\) in \(\Delta_{\mathfrak{h}^*}\) is: the point orbit if \(L=0\), and the sphere \(\F(1,2^\mathbb{C})\cong S^2\) for \(L\ne 0\). For \((L,p)\) in \(\overline{\Delta^*}\), the orbit is: a single sphere tangent bundle \(\aff(\widetilde{1};2)\cong TS^2\) if \(L=0\), or two disjoint sphere tangent bundles \(\aff(\widetilde{1};2^\mathbb{C})\cong TS^2\sqcup TS^2\) for when \(L\ne 0\).
\end{cor}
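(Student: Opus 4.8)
The plan is to specialise Theorem~\ref{en_coad_orbits} to $n=3$ and then enumerate the finitely many signatures that can occur, identifying each resulting flag manifold with a familiar space. By that theorem every coadjoint orbit of $E(3)$ meets either $\Delta_{\mathfrak{h}^*}$ or $\overline{\Delta^*}$, and its diffeomorphism type is governed entirely by the element $\omega\in\mathfrak{so}(3)$ producing the flag components in~\eqref{son_flag}. First I would classify such $\omega$: a skew-symmetric $3\times3$ matrix has eigenvalues $0$ and $\pm i\lambda$, so either $\omega=0$, in which case $\ker\omega=\mathbb{R}^3$ and the signature is $(3)$ with $k=0$, or $\omega\neq0$, in which case $\ker\omega$ is one-dimensional and there is a single nonzero pair, giving signature $(1,2^{\mathbb{C}})$ with $k=1$. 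Since $L=L(\omega)$ and $\omega\mapsto L$ is equivariant, the case $\omega=0$ is precisely $L=0$ and $\omega\neq0$ is precisely $L\neq0$.

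Next I would read off the four cases directly from Theorem~\ref{en_coad_orbits}. For $(L,0)\in\Delta_{\mathfrak{h}^*}$ the orbit is $\F(d_0,d_1^{\mathbb{C}},\dots)$: when $L=0$ this is $\F(3)$, the single flag $\{0\}\subsetneq\mathbb{R}^3$, hence the point orbit; when $L\neq0$ it is $\F(1,2^{\mathbb{C}})$. For $(L,p)\in\overline{\Delta^*}$ the orbit is $\aff(\widetilde1;d_0-1,d_1^{\mathbb{C}},\dots)$: when $L=0$ we have $d_0-1=2$, giving $\aff(\widetilde1;2)$, which is $TS^2$ by the identification of $\aff(\widetilde1;n-1)$ recalled before the corollary; when $L\neq0$ we have $d_0-1=0$, so the trivial flag component is dropped and the signature is $\aff(\widetilde1;2^{\mathbb{C}})$. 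The only book-keeping points to state carefully are this dropping of a zero-dimensional component and the convention $\F(3)=\mathrm{pt}$.

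The two identifications carrying real content are $\F(1,2^{\mathbb{C}})\cong S^2$ and $\aff(\widetilde1;2^{\mathbb{C}})\cong TS^2\sqcup TS^2$, and I expect the latter to be the main obstacle. For both, the key observation is that an orthogonal complex structure on a two-dimensional inner product space is the same datum as an orientation of that space; equipping the component $V_2=E_1^\perp$ with a complex structure $\mathbb{J}$ is therefore, via the fixed ambient orientation of $\mathbb{R}^3$, the same as orienting the line $E_1$. Thus a point of $\F(1,2^{\mathbb{C}})$ is an oriented line in $\mathbb{R}^3$, i.e.\ a unit vector, which gives the connected identification $\F(1,2^{\mathbb{C}})\cong S^2$. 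For the affine case, forgetting $\mathbb{J}$ defines a map onto $\aff(\widetilde1;2)\cong TS^2$ whose fibre is the two-element set of complex structures on $V_2$, exhibiting $\aff(\widetilde1;2^{\mathbb{C}})$ as a double cover of $TS^2$.

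To finish I would show this double cover is trivial, so that it splits as $TS^2\sqcup TS^2$. The cleanest route is that $TS^2$ deformation retracts onto its zero section $S^2$ and is hence simply connected, so every double cover is trivial and the total space is two disjoint copies, each mapping diffeomorphically to $TS^2$. As an alternative check I would verify the splitting group-theoretically: the isotropy subgroup of $E(3)$ at such an affine flag lies in the identity component $SE(3)$, since an orientation-reversing element fixing the oriented line $A_1$ must fix $E_1$ with its orientation and therefore act on $V_2$ by a reflection, sending $\mathbb{J}$ to $-\mathbb{J}$ and failing to stabilise the flag; as $E(3)$ has two components while the stabiliser sits inside the connected $SE(3)$, the orbit acquires exactly two components, each a copy of $TS^2$. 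Either argument yields $\aff(\widetilde1;2^{\mathbb{C}})\cong TS^2\sqcup TS^2$ and completes the four cases.
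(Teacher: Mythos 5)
Your proposal is correct and takes essentially the same route as the paper: classify \(\omega\in\mathfrak{so}(3)\) as zero or non-zero (giving signatures \((3)\) and \((1,2^{\mathbb{C}})\)) and then specialise Theorem~\ref{en_coad_orbits} case by case. The only difference is that you additionally justify the identifications \(\F(1,2^{\mathbb{C}})\cong S^2\) and \(\aff(\widetilde{1};2^{\mathbb{C}})\cong TS^2\sqcup TS^2\) --- via the observation that an orthogonal complex structure on a plane is an orientation, and that a double cover of the simply connected \(TS^2\) must be trivial --- which the paper asserts in the statement without spelling out; both your covering-space argument and your group-theoretic check (the stabiliser lying in \(SE(3)\), so the \(E(3)\)-orbit has exactly two components) are sound.
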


\begin{proof}
	For \(\omega\in\mathfrak{so}(3)\), either \(\omega=0\) or \(\omega\) is non-zero and admits flag components \((\ker\omega, V_1)\). The kernel \(\ker\omega\) is necessarily one-dimensional, and \(V_1\) is equipped with a complex structure. Now directly apply Theorem~\ref{en_coad_orbits}.
\end{proof}
\subsection{Affine flags with grain}
\begin{defn}\label{grain_def}
	An \emph{affine flag with grain} is defined to be an affine flag \(F\) as in \eqref{ord_aff_flag} but with an additional oriented flag space \(V_0\), the \emph{grain}, prepended to the associated flag
	\[
	[F]=\left(\{0\}\subsetneq V_0\subsetneq E_1\subsetneq\dots\subsetneq E_k=V\right)
	\]
	where each \(E_j\) is the associated subspace \([A_j]\). If \([F]\) has signature \((\widetilde{d_0},d_1,\dots,d_k)\) then we write the signature of \(F\) as \(([\widetilde{d_0},d_1];\dots,d_k)\).
\end{defn}
 The affine subspace \(A_1\) of such a flag may be thought of as a union of parallel subspaces of the form \(V_0+v\) as \(v\) ranges over \(A_1\). Our choice of terminology is to invoke the image of parallel lines of grain running along the length of a wooden plank. The manifold of affine flags with grain also admits a transitive action under the Euclidean group. Here the stabiliser of a flag must preserve all flag subspaces \(A_j\) as well as the oriented grain \(V_0\). The manifold of all such flags is a homogeneous space given by
\begin{equation}\label{aff_flag_with_grain_isotropy}
\aff([\widetilde{d_0},d_1];\dots, d_k)=\frac{E(V)}{E(A_1)_{[V_0]}\times\dots\times O(V_k)}.
\end{equation}
The notation \(E(A_1)_{[V_0]}\) denotes the subgroup of \(E(A_1)\) for which \([rV_0+d]=V_0\) and preserves the orientation of \(V_0\). For example, if \(V_0\) is the oriented span of the line \(v\), then \(E(n)_{[V_0]}=O(n)_v\ltimes\mathbb{R}^n\).

There are three natural bundle structures on the manifold of affine flags with grain which concern us. The first bundle map sends the affine flag \(F\) with grain \(V_0\) to the ordinary affine flag obtained by forgetting the grain subspace \(V_0\). The fibre is then equal to the space of all oriented \(d_0\)-dimensional subspaces \(V_0\) inside \(E_1\). This gives us the equivariant fibre bundle 
\begin{equation}\label{adjoint_left}
\begin{tikzcd}
\F(\widetilde{d_0},d_1)\arrow[r,hook] & \aff([\widetilde{d_0},d_1];\sigma)\arrow[d,two heads] \\ & \aff(d_0+d_1;\sigma).
\end{tikzcd}
\end{equation}
The second bundle structure is the vector bundle map sending \(F\) to the associated flag \([F]\). The fibre in this case is once again equal to all distinct translates of \(A_1\), itself isomorphic to the vector space \(V/E_1\). We thus have the equivariant vector bundle
\begin{equation}\label{blah2}
\begin{tikzcd}
V/E_1\arrow[r,hook] & \aff([\widetilde{d_0},d_1];\sigma)\arrow[d,two heads] \\ & \F(\widetilde{d_0},d_1,\sigma).
\end{tikzcd}
\end{equation}
The third and final bundle map sends an ordinary affine flag \(
F=\left(A_0\subsetneq A_1\subsetneq\dots\subsetneq A_k=V\right)
\),
to the affine flag with grain \(F=A_1\subsetneq\dots\subsetneq A_k=V\) with associated flag \([F]=\left(E_0\subsetneq E_1\subsetneq\dots\subsetneq E_k=V\right)\); thus the first space \(A_0\) is forgotten yet its associated space \(E_0=[A_0]\) becomes the grain. The fibre is equal to all distinct translates of  \(A_0\) within the space \(A_1\). This is isomorphic to the affine space \(A_1/A_2\) and so we obtain the equivariant affine bundle
\begin{equation}\label{coad_bijbundle}
\begin{tikzcd}
A_1/A_2\arrow[r,hook] & \aff(\widetilde{d_0};d_1,\sigma)\arrow[d,two heads] \\ & \aff([\widetilde{d_0},d_1];\sigma).
\end{tikzcd}
\end{equation}

\begin{thm}[Adjoint orbits of \(E(n)\)]  \label{en_adjoint_orbits}
	All adjoint orbits of \(G=E(n)\) intersect the sets \(\Delta_{\mathfrak{h}}\) and \(\overline{\Delta}\) as given in \eqref{Delta_defn}. Let \(\omega\) be an element in \(\mathfrak{so}(n)\) determining the flag components in \eqref{son_flag}. The orbit through \((\omega,0)\in\Delta_{\mathfrak{h}}\) is equivariantly diffeomorphic to \(\aff(d_0;d_1^\mathbb{C},\dots,d_k^\mathbb{C})\), the orbit through \((\omega,v)\in\overline{\Delta}\) is equivariantly diffeomorphic to \(\aff([\widetilde{1},d_0-1]; d_1^\mathbb{C},\dots,d_k^\mathbb{C})\) and the orbit \(\orb^H_{\omega,v}\) is \(H\)-equivariantly diffeomorphic to \(\F(\widetilde{1},d_0-1, d_1^\mathbb{C},\dots,d_k^\mathbb{C})\).
\end{thm}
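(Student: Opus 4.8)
The plan is to run the argument of Theorem~\ref{en_coad_orbits} on the adjoint side, reading each orbit off as a homogeneous space and matching its isotropy subgroup against the stabilisers recorded in \eqref{aff_hermitian_flag} and \eqref{aff_flag_with_grain_isotropy}. The intersection claim is immediate: by Proposition~\ref{delta_sets} every adjoint orbit meets \(\Delta\) in an \(H\)-orbit, and \(\Delta=\Delta_{\mathfrak{h}}\cup\overline{\Delta}\) by \eqref{Delta_defn}, so it suffices to treat representatives \((\omega,0)\in\Delta_{\mathfrak{h}}\) and \((\omega,v)\in\overline{\Delta}\). In both cases properness lets me use the split form \eqref{Ad_isotropygroup_proper} of the stabiliser, while Theorem~\ref{on_orbits} supplies the Hermitian flag \((\ker\omega,V_1,\dots,V_k)\) of \eqref{son_flag} together with the identification \(H_\omega=O(\ker\omega)\times U(V_1)\times\dots\times U(V_k)\), an orthogonal transformation commuting with \(\omega\) being exactly one that preserves \(\ker\omega\) setwise and is unitary for each \(\mathbb{J}_j\).

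For the orbit through \((\omega,0)\), equation~\eqref{Ad_isotropygroup_proper} gives \(G_{\omega,0}=H_\omega\ltimes\ker\omega\). Since the unitary factors act trivially on \(\ker\omega\) while \(O(\ker\omega)\) acts on it in the standard way, I would regroup the translations with the orthogonal factor to obtain \(G_{\omega,0}=E(\ker\omega)\times U(V_1)\times\dots\times U(V_k)\), where \(E(\ker\omega)\) is the Euclidean group of the \(d_0\)-dimensional affine space \(A_1\) whose associated subspace is \(\ker\omega\). This is precisely the stabiliser appearing in \eqref{aff_hermitian_flag} for \(\aff(d_0;d_1^{\mathbb{C}},\dots,d_k^{\mathbb{C}})\), so the assignment \(\Ad_{(r,d)}(\omega,0)\mapsto(r,d)\cdot F\), where \(F\) is the reference affine flag with \([A_1]=\ker\omega\) and complex structures \(\mathbb{J}_j\) on the \(V_j\), is the desired equivariant diffeomorphism.

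For the orbit through \((\omega,v)\) with \(v\ne0\), properness gives \(G_{\omega,v}=(H_\omega\cap H_v)\ltimes\ker\omega\). Exactly as in the proof of Theorem~\ref{en_coad_orbits} I split \(\ker\omega=\spoon\{v\}\oplus K_0\) orthogonally with \(\dim K_0=d_0-1\); an element of \(H_\omega\) fixing \(v\) is the identity on \(\spoon\{v\}\) and an arbitrary orthogonal map on \(K_0\), so \(H_\omega\cap H_v=O(K_0)\times U(V_1)\times\dots\times U(V_k)\). Regrouping \(O(K_0)\) with the full translation group \(\ker\omega\) (the unitary factors again acting trivially) yields \(G_{\omega,v}=(O(K_0)\ltimes\ker\omega)\times U(V_1)\times\dots\times U(V_k)\), which I would identify with \(E(A_1)_{[V_0]}\times U(V_1)\times\dots\times U(V_k)\), the stabiliser in \eqref{aff_flag_with_grain_isotropy} of the affine flag with oriented grain \(V_0=\spoon\{v\}\), associated subspace \([A_1]=\ker\omega\), and flag components as in \eqref{finerflag}. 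This gives the diffeomorphism onto \(\aff([\widetilde{1},d_0-1];d_1^{\mathbb{C}},\dots,d_k^{\mathbb{C}})\); dropping the translations, the \(H\)-isotropy is just \(H_\omega\cap H_v=O(K_0)\times U(V_1)\times\dots\times U(V_k)\), which, since \(SO(\spoon\{v\})\) is trivial, matches the linear Hermitian flag \(\F(\widetilde{1},d_0-1,d_1^{\mathbb{C}},\dots,d_k^{\mathbb{C}})\).

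The main obstacle I anticipate is not any single computation but the bookkeeping of the semidirect-product structure: one must check carefully that the orthogonal factor \(O(K_0)\) fixes \(v\) pointwise, hence preserves the orientation of the grain \(V_0=\spoon\{v\}\), which is what the subscript in \(E(A_1)_{[V_0]}\) encodes, while still acting on the entire translation space \(\ker\omega=\spoon\{v\}\oplus K_0\) exactly as \(E(A_1)_{[V_0]}\) acts on \(A_1\); and that the unitary factors act trivially on these translations, so that the passage from \((A\times B)\ltimes N\) with \(B\) acting trivially on \(N\) to \((A\ltimes N)\times B\) is legitimate. Once these stabilisers are matched, verifying that \(\Ad_{(r,d)}(\omega,v)\mapsto(r,d)\cdot F\) is well defined and \(G\)-equivariant is routine, exactly as in the coadjoint case.
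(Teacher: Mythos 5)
Your proposal is correct and takes essentially the same route as the paper's own proof: both use properness to write the stabilisers in the split form \eqref{Ad_isotropygroup_proper} and then match \(G_{\omega,0}=H_\omega\ltimes\ker\omega\) and \(G_{\omega,v}=(H_\omega\cap H_v)\ltimes\ker\omega\) against the flag stabilisers in \eqref{aff_hermitian_flag} and \eqref{aff_flag_with_grain_isotropy}, with the equivariant diffeomorphism \(\Ad_{(r,d)}(\omega,v)\mapsto rF+d\). Your explicit identifications \(H_\omega=O(\ker\omega)\times U(V_1)\times\dots\times U(V_k)\), \(H_\omega\cap H_v=O(K_0)\times U(V_1)\times\dots\times U(V_k)\), and the regrouping \((A\times B)\ltimes N\cong(A\ltimes N)\times B\) when \(B\) acts trivially on \(N\) simply spell out the bookkeeping the paper compresses into ``we recognise''.
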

\begin{proof}
	From \eqref{Ad_isotropygroup_proper} we have \(G_{\omega,0}=H_\omega\ltimes\ker\omega\). From \eqref{aff_hermitian_flag} we recognise that this is equal to the isotropy group of an affine flag \(F\) as in \eqref{ord_aff_flag} with flag components given by \eqref{son_flag} and \(A_1=\ker\omega\). It follows that there is an equivariant diffeomorphism between this orbit and \(\aff(d_0;d_1^\mathbb{C},\dots,d_k^\mathbb{C})\) given by sending \(\Ad_{(r,d)}(\omega,0)\) to the flag \(rF+d\).
	
	For \((\omega,v)\in\overline{\Delta}\), since the point is proper we have that \(G_{\omega,v}=(H_v\cap H_\omega)\ltimes\ker\omega\). Therefore, from \eqref{aff_flag_with_grain_isotropy} we recognise that \(G_{\omega,v}\) is equal to the isotropy subgroup fixing the affine flag with grain \(F\) as given in Definition~\ref{grain_def}, whose associated flag is determined by the flag components in \eqref{finerflag}, and with \(A_1=\ker\omega\). The diffeomorphism is then given by sending \(\Ad_{(r,d)}(\omega,v)\) to the flag \(rF+d\). The result for the orbit \(\orb^H_{\omega,v}\) is obtained similarly by remarking that the isotropy subgroup is \(H_\omega\cap H_v\).
\end{proof}
\begin{cor}\label{cor2}
	For \(G=E(3)\), the adjoint orbit through \((\omega,0)\) in \(\Delta_{\mathfrak{h}}\) is: the point orbit if \(\omega=0\), and a sphere tangent bundle \(\aff(1;2^\mathbb{C})\cong TS^2\) for \(\omega\ne 0\). For \((\omega,v)\) in \(\overline{\Delta}\), the orbit is: a sphere \(\aff([\widetilde{1};2])\cong S^2\) if \(\omega=0\), or two disjoint sphere tangent bundles \(\aff(\widetilde{1};2^\mathbb{C})\cong TS^2\sqcup TS^2\) for when \(\omega\ne 0\).
\end{cor}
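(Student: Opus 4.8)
\section*{Proof proposal for Corollary~\ref{cor2}}

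The plan is to proceed exactly as in Corollary~\ref{cor1}, substituting the adjoint classification of Theorem~\ref{en_adjoint_orbits} for the coadjoint one, and then to name the resulting affine flag manifolds by hand. First I would enumerate the possibilities for \(\omega\in\mathfrak{so}(3)\). Either \(\omega=0\), so that the unique flag component is \(\ker\omega=\mathbb{R}^3\) and \(d_0=3\); or \(\omega\ne 0\), in which case a nonzero skew-symmetric \(3\times 3\) matrix has rank \(2\), forcing \(\ker\omega\) to be one-dimensional (\(d_0=1\)) and leaving a single nonzero eigenvalue pair with two-dimensional complex flag component \(V_1\) (\(d_1=2\)). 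Hence the only signatures that occur are \((3)\) and \((1,2^\mathbb{C})\).

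Next I would feed each of the four cases into Theorem~\ref{en_adjoint_orbits}. For \((\omega,0)\in\Delta_{\mathfrak{h}}\): when \(\omega=0\) the formula returns \(\aff(3;)\), the single point orbit through the origin of \(\mathfrak{g}\); when \(\omega\ne 0\) it returns \(\aff(d_0;d_1^\mathbb{C})=\aff(1;2^\mathbb{C})\). For \((\omega,v)\in\overline{\Delta}\): when \(\omega=0\) we have \(d_0=3\), giving \(\aff([\widetilde{1},2];)\) (the manifold written \(\aff([\widetilde{1};2])\) in the statement); when \(\omega\ne 0\) we have \(d_0=1\), so \(d_0-1=0\) and the formula gives \(\aff([\widetilde{1},0];2^\mathbb{C})\), which I would simplify to \(\aff(\widetilde{1};2^\mathbb{C})\) on the grounds that a grain of codimension zero inside \(E_1\) merely orients the flag pole \(A_1\).

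The remaining task is to identify these manifolds. The point orbit is immediate. The case \(\omega=0,\,v\ne 0\) is checked directly from \eqref{alt_Ad_action}: since \(\Ad_{(r,d)}(0,v)=(0,rv)\), the orbit is \(\{0\}\times\{rv\mid r\in O(3)\}\cong S^2\), confirming \(\aff([\widetilde{1},2];)\cong S^2\). For \(\aff(1;2^\mathbb{C})\) I would use the vector bundle \eqref{aff_flag_vbundle} with base \(\F(1,2^\mathbb{C})\); by Theorem~\ref{on_orbits} this base is the adjoint orbit of a nonzero element of \(\mathfrak{so}(3)\), hence a sphere \(S^2\) of oriented axis directions, while the fibre \(V/E_1\) over a direction \(n\) is the plane \(n^\perp\cong T_nS^2\). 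This yields \(\aff(1;2^\mathbb{C})\cong TS^2\), matching the identification of oriented affine lines with \(TS^{n-1}\) recalled before Corollary~\ref{cor1}.

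The one subtle point, which I expect to be the main obstacle, is the appearance of two disjoint copies in \(\aff(\widetilde{1};2^\mathbb{C})\cong TS^2\sqcup TS^2\). Here I would compute the isotropy group directly: writing \(\omega\) via its axis vector \(\vec{\omega}\), any \(r\in O(3)\) with \(\Ad_r\omega=\omega\) satisfies \((\det r)\,r\vec{\omega}=\vec{\omega}\), so an \(r\) that also fixes the nonzero \(v\in\ker\omega=\spoon\{\vec{\omega}\}\) is forced to have \(\det r=1\); thus \(G_{\omega,v}=(H_\omega\cap H_v)\ltimes\ker\omega=SO(2)\ltimes\spoon\{v\}\) lies entirely in the identity component \(SE(3)\). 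Since \(E(3)/SE(3)\cong\mathbb{Z}/2\), the orbit \(E(3)/G_{\omega,v}\) splits into exactly two connected components, each equal to \(SE(3)/(SO(2)\ltimes\spoon\{v\})\cong TS^2\) and interchanged by any reflection (which reverses the complex structure relative to the ambient orientation). This produces \(TS^2\sqcup TS^2\) and completes the verification against Theorem~\ref{en_adjoint_orbits}.
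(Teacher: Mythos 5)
Your proposal is correct and follows the same route the paper intends for this corollary, which (like Corollary~\ref{cor1}) is proved simply by enumerating the possible signatures of \(\omega\in\mathfrak{so}(3)\) --- namely \((3)\) and \((1,2^\mathbb{C})\) --- and applying Theorem~\ref{en_adjoint_orbits}. The extra details you supply, the degeneration \(\aff([\widetilde{1},0];2^\mathbb{C})=\aff(\widetilde{1};2^\mathbb{C})\) and the isotropy computation showing \(G_{\omega,v}=SO(2)\ltimes\spoon\{v\}\subset SE(3)\) so that the orbit has exactly two components \(TS^2\sqcup TS^2\), are precisely the points the paper leaves implicit, and they check out.
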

If we identify \(\mathfrak{so}(3)\) with \(\mathbb{R}^3\) in the standard way, then the form \(\langle(\omega_1,v_1),(\omega_2,v_2)\rangle=\omega_1^Tv_2+v_1^T\omega_2\) defines an invariant non-degenerate symmetric form on \(\mathfrak{se}(3)\). Therefore, the adjoint and coadjoint representations of \(E(3)\) are isomorphic, and thus the orbits are identical. This confers with the results from Corollaries~\ref{cor1} and~\ref{cor2}. However, notice that the orbit bijection from Theorem~\ref{main_thm} does not pair identical orbits together, but that they are indeed homotopic.
\subsection{The geometry of the Euclidean group orbits}

As the adjoint and coadjoint orbits of \(E(n)\) are flag manifolds, the orbit geometry described earlier concerning bundles between orbits and submanifolds may be reinterpreted as maps between flag manifolds. In Figure~\ref{en_diag} we recast the diagrams from Figure~\ref{orbit_geometry_fig} by replacing the generic orbits through \(\overline{\Delta}\) and \(\overline{\Delta^*}\) with the corresponding Hermitian flag manifolds using Theorems~\ref{en_adjoint_orbits} and \ref{en_coad_orbits}. The bundle maps between the orbits precisely correspond to the natural flag bundles introduced before.

The orbit bijection result from Theorem~\ref{main_thm} for the Euclidean group may also be recast in terms of flag manifolds. For elements in \(\Delta_{\mathfrak{h}}\) and \(\Delta_{\mathfrak{h}^*}\), the vector bundle \(\orb^G_{\omega,0}\longrightarrow\orb^G_{L,0}\) between two bijected orbits is the vector bundle 

\begin{equation}
\aff(d_0;d_1^\mathbb{C},\dots,d_k^\mathbb{C})\longrightarrow\F(d_0,d_1^\mathbb{C},\dots,d_k^\mathbb{C})
\end{equation}
given in \eqref{aff_flag_vbundle} with fibres isomorphic to \(V/\ker\omega\cong\Imag\omega\cong\mathbb{R}^{n-d_0}\). Recall that these fibres may be identified with the translates of the space \(\ker\omega\). For elements in \(\overline{\Delta}\) and \(\overline{\Delta^*}\), the affine bundle \(\orb^G_{L,p}\longrightarrow\orb^G_{\omega,v}\) between two bijected orbits is the affine bundle
\begin{equation}
\aff(\widetilde{1};d_0-1,d_1^\mathbb{C},\dots,d_k^\mathbb{C})\longrightarrow\aff([\widetilde{1},d_0-1];d_1^\mathbb{C},\dots,d_k^\mathbb{C})
\end{equation}
given in \eqref{coad_bijbundle} with fibres isomorphic to the affine quotient \(\ker\omega/\ker\tau_p\cong\mathbb{R}^{d_0-1}\). Recall that these fibres may be identified with the translates of \(\ker\tau_p\) within \(\ker\omega\). As these bundles all have contractible fibres, we once again arrive at the result that bijected orbits have the same homotopy type. 
\def\D{\mathcal{D}}
\def\R{\mathbb{R}}

From Theorem~\ref{orbit_geometry_thm} all generic coadjoint orbits through \((L,p)\in\overline{\Delta^*}\) are equivariant fibre bundles over the orbit through \((0,p)\), which by Theorem~\ref{en_coad_orbits} may be identified with \(\aff(\widetilde{1};n-1)\). This bundle map corresponds to the flag bundle in \eqref{aff_flag_bundle} which sends the affine flag to the one-dimensional oriented affine line, or flag pole. This orbit is the well-known symplectic manifold $\D_n$ of oriented lines in $\R^n$. The invariant symplectic form is unique up to scalar multiple and can be described as follows. First (similar to the usual argument for Grassmannians), every line near $\ell\in\D_n$ is the graph of an affine map $\ell\to\ell^\perp$, whence we can identify 
	$$ T_\ell\D_n \simeq \mathrm{Aff}(\ell,\ell^\perp).$$
	An element of this space can be given by $\mathbf{a}+s\mathbf{b}$ where $s$ is the parameter along $\ell$ and $\mathbf{a},\mathbf{b}$ are elements of $\ell^\perp$ (or more canonically of $\R^n/\ell$). Given two such affine linear maps, one forms the skew product
	$$\omega(\mathbf{a}+s\mathbf{b},\mathbf{a}'+s\mathbf{b}') = \mathbf{a}\cdot\mathbf{b'} - \mathbf{a'}\cdot\mathbf{b}.$$
	This is a well-defined expression (independent of choice of point $s=0$ on $\ell$) and defines a natural symplectic structure on $\D_n$.  On the other hand, it does depend on the speed of parametrization of $\ell$, and this gives the scalar multiple relating different invariant symplectic structures. 
	
	The orbit \(\D_n\) may be thought of as being a fundamental orbit, since all generic orbits fibre over it. Therefore, using this result from Theorem~\ref{orbit_geometry_thm} in combination with Theorems~\ref{on_orbits} and \ref{en_coad_orbits}, we can establish the following result concerning the symplectic geometry of flag manifolds.

	\begin{cor}
		Let \(d_0,d_1,\dots,d_k\) be positive integers with each \(d_j\) even for \(j>0\). The flag manifolds \(	\aff(\widetilde{1};d_0-1,d_1^\mathbb{C},\dots,d_k^\mathbb{C})\) and \(\F(d_0-1,d_1^\mathbb{C},\dots,d_k^\mathbb{C})\) may be given a symplectic form which is invariant under the actions of \(E(n)\) and \(O(n)\) respectively. With respect to this symplectic structure the \(E(n)\)-equivariant bundle
		\[
		\aff(\widetilde{1};d_0-1,d_1^\mathbb{C},\dots,d_k^\mathbb{C})\longrightarrow\aff(\widetilde{1};n-1) = \mathcal{D}_n
		\]
		given in \eqref{aff_flag_bundle} is a symplectic fibration with fibres symplectomorphic to \(\F(d_0-1,d_1^\mathbb{C},\dots,d_k^\mathbb{C})\), and the \(E(n)\)-equivariant vector bundle 
		\[
		\aff(\widetilde{1};d_0-1,d_1^\mathbb{C},\dots,d_k^\mathbb{C})\longrightarrow\F(\widetilde{1},d_0-1,d_1^\mathbb{C},\dots,d_k^\mathbb{C})
		\] 
		given in \eqref{aff_flag_vbundle} has isotropic fibres.
	\end{cor}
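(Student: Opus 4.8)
The plan is to realise both flag manifolds as coadjoint orbits and then to read off the two symplectic statements directly from Theorem~\ref{orbit_geometry_thm}. Fix an element $\omega\in\mathfrak{so}(n)$ determining the flag components \eqref{son_flag} with $\dim\ker\omega=d_0$, choose a unit vector $v\in\ker\omega$, and set $(L,p)=(L(\omega),p(v))\in\overline{\Delta^*}$. By Theorem~\ref{en_coad_orbits} the coadjoint orbit $\orb^G_{L,p}$ is equivariantly diffeomorphic to $\aff(\widetilde{1};d_0-1,d_1^\mathbb{C},\dots,d_k^\mathbb{C})$, so this manifold inherits the $E(n)$-invariant K.K.S.\ symplectic form. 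Applying Theorem~\ref{on_orbits} to the orthogonal group of the $(n-1)$-dimensional space $v^\perp$ exhibits $\F(d_0-1,d_1^\mathbb{C},\dots,d_k^\mathbb{C})$ as a coadjoint orbit of that orthogonal group, and hence endows it with an invariant K.K.S.\ form. This settles the existence of both symplectic structures.

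For the symplectic fibration I would identify the flag bundle \eqref{aff_flag_bundle} with the orbit projection $\orb^G_{L,p}\to\orb^G_{0,p}$, precisely as in the discussion preceding the corollary, where $\orb^G_{0,p}$ is identified with $\mathcal{D}_n=\aff(\widetilde{1};n-1)$. Theorem~\ref{orbit_geometry_thm} asserts that the fibres of this projection are symplectic submanifolds and, through the map $\Ad^*_{(r,0)}(L,p)\mapsto\Ad^*_r(\iota^*_pL)$ from its proof, are equivariantly symplectomorphic to the coadjoint orbit of $H_p$ through $\iota^*_pL\in\mathfrak{so}(n-1)^*$. It then remains to identify this fibre orbit with $\F(d_0-1,d_1^\mathbb{C},\dots,d_k^\mathbb{C})$. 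Since $v\in\ker\omega$ and $\ker\omega=\spoon\{v\}\oplus K_0$ with $\dim K_0=d_0-1$, the element of $\mathfrak{so}(n-1)^*$ corresponding to $\iota^*_pL$ is the restriction of $\omega$ to $v^\perp=K_0\oplus V_1\oplus\dots\oplus V_k$; its flag components are $(K_0,V_1,\dots,V_k)$—that is, the components \eqref{finerflag} with the oriented line $\spoon\{v\}$ deleted—carrying the same complex structures on each $V_j$. By Theorem~\ref{on_orbits} this orbit is exactly $\F(d_0-1,d_1^\mathbb{C},\dots,d_k^\mathbb{C})$, so the projection is a symplectic fibration with fibres symplectomorphic to this Hermitian flag manifold.

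For the last assertion I would identify the vector bundle \eqref{aff_flag_vbundle}, the map $F\mapsto[F]$, with the orbit projection $\orb^G_{L,p}\to\orb^H_{L,p}$; under Theorem~\ref{en_coad_orbits} the base $\orb^H_{L,p}$ is $\F(\widetilde{1},d_0-1,d_1^\mathbb{C},\dots,d_k^\mathbb{C})$ and the fibres are the orbits of $V$. Theorem~\ref{orbit_geometry_thm} states directly that these fibres are isotropic submanifolds, which is precisely the claim. The content of the corollary is therefore organisational rather than computational: both symplectic statements are the two fibre statements of Theorem~\ref{orbit_geometry_thm} transported through the diffeomorphisms of Theorems~\ref{on_orbits} and~\ref{en_coad_orbits}. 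I expect the only real obstacle to be the bookkeeping in the previous paragraph—verifying that the fibre of $\orb^G_{L,p}\to\orb^G_{0,p}$ carries exactly the flag data $(d_0-1,d_1^\mathbb{C},\dots,d_k^\mathbb{C})$—together with confirming that the flag-bundle maps \eqref{aff_flag_bundle} and \eqref{aff_flag_vbundle} coincide, under those diffeomorphisms, with the orbit projections $\orb^G_{L,p}\to\orb^G_{0,p}$ and $\orb^G_{L,p}\to\orb^H_{L,p}$ respectively.
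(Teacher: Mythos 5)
Your proposal is correct and takes essentially the same route as the paper: the corollary is obtained there, exactly as you do, by identifying \(\aff(\widetilde{1};d_0-1,d_1^\mathbb{C},\dots,d_k^\mathbb{C})\) with the generic coadjoint orbit \(\orb^G_{L,p}\) carrying the K.K.S.\ form (Theorem~\ref{en_coad_orbits}), identifying the projection to \(\orb^G_{0,p}\cong\mathcal{D}_n\) with the flag bundle \eqref{aff_flag_bundle} and the map \(F\mapsto[F]\) of \eqref{aff_flag_vbundle} with \(\orb^G_{L,p}\to\orb^H_{L,p}\), and then reading the symplectic-fibre and isotropic-fibre assertions off Theorem~\ref{orbit_geometry_thm}, with the fibre recognised via Theorem~\ref{on_orbits} as the little-group (\(H_p\cong O(n-1)\)) coadjoint orbit through \(\iota^*_pL\). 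Your bookkeeping---that this fibre orbit corresponds to \(\omega\) restricted to \(v^\perp\) with flag components \((K_0,V_1,\dots,V_k)\), hence equals \(\F(d_0-1,d_1^\mathbb{C},\dots,d_k^\mathbb{C})\)---is precisely the identification the paper leaves implicit in the discussion preceding the corollary.
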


	We conclude by remarking that, since any given group \(G\) of Euclidean type is a subgroup of \(E(n)\) for large enough \(n\), every adjoint and coadjoint orbit of \(G\) will be a submanifold of the flag manifolds that we have exhibited for \(E(n)\). As a particular example, for the special Euclidean group \(SE(n)\), the orbits are the connected components of the orbits of \(E(n)\). These also admit a geometric description in terms of flag manifolds.

\begin{figure}[t]

\begin{subfigure}[h]{\textwidth}
	\caption{Adjoint orbit diagram}
	\centering
		\begin{tikzpicture}[scale=0.6]
	\path[use as bounding box] (-7,-2) rectangle (15,7);
	\draw[-{Latex}] (0.9,0) node [left,scale=1]{$\aff(d_0;d_1^\mathbb{C},\dots,d_k^\mathbb{C})$} -- (6.5,0) node[right,scale=1]{$\F(d_0,d_1^\mathbb{C},\dots,d_k^\mathbb{C})$};
	\draw[-{Latex} ] (9,5.4) node[scale=1,above] {$\F(\widetilde{1},d_0-1,d_1^\mathbb{C},\dots,d_k^\mathbb{C})$} -- (9,.6);
	\draw[ -{Latex}  ] (-1.5,5.4)  -- (-1.5,0.8);	
	\draw[-{Latex}] (1.4,6)node[left, scale=1] {$\aff([\widetilde{1},d_0-1];d_1^\mathbb{C},\dots,d_k^\mathbb{C})$} -- (5.8,6);
	\node at (4,-0.5){$\mathbb{R}^{n-d_0}$};	
	\node at (4,6.5){$\mathbb{R}^{n-d_0}$};	
	\node at (11,3.5){ $\F(\widetilde{1},d_0-1)$};		
	\node at (-3.5,3.5){ $\F(\widetilde{1},d_0-1)$};
	\end{tikzpicture}	
\end{subfigure}
\begin{subfigure}[h]{\textwidth}
	\caption{Coadjoint orbit diagram}
		\centering
\begin{tikzpicture}[scale=0.6]
\path[use as bounding box] (-7,-2) rectangle (15,7);
\draw[-{Latex}] (0.9,0) node [left,scale=1]{$\aff(\widetilde{1};n-1)$} -- (7,0) node[right,scale=1]{$\F(\widetilde{1},n-1)$};
\draw[-{Latex} ] (9,5.4) node[scale=1,above] {$\F(\widetilde{1},d_0-1,d_1^\mathbb{C},\dots,d_k^\mathbb{C})$} -- (9,.6);
\draw[ -{Latex}  ] (-1.5,5.4)  -- (-1.5,0.8);	
\draw[-{Latex}] (1.4,6)node[left, scale=1] {$\aff(\widetilde{1};d_0-1,d_1^\mathbb{C},\dots,d_k^\mathbb{C})$} -- (5.8,6);
\node at (4,-0.5){$\mathbb{R}^{n-1}$};	
\node at (4,6.5){$\mathbb{R}^{n-1}$};	
\node at (12,3.5){ $\F(d_0-1,d_1^\mathbb{C},\dots,d_k^\mathbb{C})$};		
\node at (-4.7,3.5){ $\F(d_0-1,d_1^\mathbb{C},\dots,d_k^\mathbb{C})$};
\end{tikzpicture}
\end{subfigure}
\caption{\label{en_diag} }
\end{figure}

\small  

\bigskip
\setlength{\parindent}{0pt}

%

PA: philip.arathoon@manchester.ac.uk

JM: j.montaldi@mancehster.ac.uk

\medskip
\hfill \it
School of Mathematics, 
University of Manchester,
Manchester, M13 9PL, UK.

\end{document}